\documentclass{amsart}

\usepackage[utf8]{inputenc}
\usepackage{enumerate}
\usepackage{xcolor,pgf,tikz,pgflibraryarrows,pgffor,pgflibrarysnakes}
\usepackage{float}
\usepackage{enumitem}
\setlist[enumerate]{leftmargin=*, nosep}

\usepackage[colorlinks=true,linkcolor=blue]{hyperref}

\newtheorem{theorem}{Theorem}[section]

\newtheorem{corollary}[theorem]{Corollary}
\newtheorem{prop}[theorem]{Proposition}

\newtheorem{pbm}[theorem]{Problem}
\newtheorem{question}[theorem]{Question}

\theoremstyle{definition}

\theoremstyle{remark}
\newtheorem{remark}[theorem]{Remark}

\usepackage[flushmargin]{footmisc}
\newcommand\blfootnote[1]{%
	\begingroup
	\renewcommand\thefootnote{}\footnote{#1}%
	\addtocounter{footnote}{-1}%
	\endgroup}

\newcommand{\NN}{\mathbb{N}}
\newcommand{\ZZ}{\mathbb{Z}}
\newcommand{\KK}{\mathbb{K}}
\newcommand{\DD}{\mathbb{D}}

\newcommand{\cl}{\overline}
\newcommand{\lspan}{\operatorname{span}}
\newcommand{\eps}{\varepsilon}
\newcommand{\val}{\operatorname{val}}

\begin{document}

\title[Expansive operators on Fr\'echet spaces without continuous norm]{Expansive operators on Fr\'echet\\ spaces without continuous norm}
\author{Antoni L\'opez-Mart\'inez \& Quentin Menet}

\maketitle

\begin{abstract}
	We investigate the existence of (metrically) expansive operators on Fr\'echet spaces without continuous norm. We first solve a problem posed by Bernardes, Caraballo, Darji, F\'avaro, and Peris in 2025, by showing that there is no expansive operator on the Fr\'echet space $\omega$ of all sequences. We prove this fact by investigating the existence of orbits remaining cofinitely inside some fixed closed subspace of finite codimension. We then give examples of expansive operators on some other Fr\'echet spaces without continuous norm.
\end{abstract}

\blfootnote{\textbf{2000 Mathematics Subject Classification}: 47A15, 47A16, 37B05, 37B25, 37B65.\\ \textbf{Key words and phrases}: Linear dynamics, metric expansivity, space of all sequences.}



\section{Introduction}

Given a continuous linear operator $T : X \longrightarrow X$ acting on a topological vector space $X$ (from now on simply called an \textit{operator}), the field of linear dynamics studies the long-term behaviour of $T$ as a dynamical system, connecting the branches of functional analysis, operator theory, and topological dynamics. For a long time the research in this area focused mainly on hypercyclicity (existence of a dense orbit) and related concepts such as Devaney chaos or frequent hypercyclicity. This focus on dense orbits was largely motivated by the famous \textit{invariant subspace problem} and its \textit{invariant subset problem} version (see the monographs \cite{BayartMa2009_book_dynamics,GrossePe2011_book_linear}).

Recently, research in linear dynamics has expanded to include notions originating in smooth and topological dynamics. Rather than only asking whether dense orbits exist, researchers now also investigate properties such as hyperbolicity, structural stability, the shadowing property, chain recurrence, and \textit{expansivity}, usually in the context of invertible operators. In topological dynamics, expansivity captures the idea that distinct points are eventually pushed apart by iteration. In the setting of Banach spaces, the study of expansivity for continuous linear operators has revealed strong connections with stability, classical dynamical properties and spectral theory:
\begin{enumerate}[label=$\bullet$]
	\item \textbf{Hyperbolicity, stability, and shadowing.} For an invertible operator acting on a Banach space, being hyperbolic is equivalent to having both \textit{expansivity} and the shadowing property \cite{BernardesMe2021_ETDS_shadowing}. Moreover, if an invertible operator is structurally stable and presents positive expansivity, then it is hyperbolic \cite{BernardesMe2021_ETDS_shadowing}.
	
	\item \textbf{Hypercyclicity and chaos.} Expansivity imposes severe restrictions on the existence of dense orbits: a \textit{uniformly positively expansive} operator on a Banach space cannot be hypercyclic, and therefore cannot be Devaney chaotic \cite{BernardesCiDarMePu2018_JMAA_expansivity}.
	
	\item \textbf{Spectral characterizations.} On a Banach space $X$, an invertible operator $T$ is \textit{uniformly positively expansive} if and only if its approximate point spectrum does not intersect the closed unit disk, that is, if $\sigma_a(T) \cap \cl{\DD} = \emptyset$ \cite{BernardesCiDarMePu2018_JMAA_expansivity}.
\end{enumerate}

These ``new'' properties (hyperbolicity, stability, shadowing, expansivity) have been extended beyond Banach spaces to operators acting on locally convex spaces, and specifically on Fr\'echet spaces, in \cite{BernardesCaDarFaPe2025_JFA_generalized}. However, once we move outside the Banach framework there appear some technical and conceptual differences. For instance, although the operator two times the identity $T := 2 I_X$ is expansive on each Banach space $X$, the following question was stated in~\cite{BernardesCaDarFaPe2025_JFA_generalized}:

\begin{question}[\textbf{\cite[Problem~B]{BernardesCaDarFaPe2025_JFA_generalized}}]\label{Ques:expansive}
	Does every Fr\'echet space (or locally convex space) support an expansive operator? What about the Fr\'echet space $\omega$?
\end{question}

In \cite{BernardesCaDarFaPe2025_JFA_generalized} it was emphasized that here \textit{expansive} is meant in the metric sense: a homeomorphism $T : X \longrightarrow X$ on a metric space $(X,d)$ is said to be \textit{expansive} with respect to the metric $d$ if there is some $c>0$, called expansivity constant for~$T$, such that for every pair $x, y \in X$ with $x \neq y$ there exists some $n \in \ZZ$ for which $d(T^nx,T^ny) \geq c$. When $T : X \longrightarrow X$ is a continuous but not necessarily invertible map, then $T$ is said to be \textit{positively expansive} with respect to the metric $d$ if there is some $c>0$, called positive expansivity constant for~$T$, such that for every pair $x, y \in X$ with $x \neq y$ there exists some $n \in \NN_0$ for which $d(T^nx,T^ny) \geq c$.

Given a Fr\'echet space $X$ whose topology is induced by an increasing sequence of seminorms $(p_k)_{k \geq 1}$, a standard way of defining a metric that induces its topology is using the formula
\begin{equation}\label{eq:metric}
	d(x,y) := \sum_{k=1}^{\infty} \frac{\min\{ 1 , p_k(x-y) \}}{2^k} \quad \text{ for all } x,y \in X.
\end{equation}
Although other translation-invariant metrics can induce the same topology on $X$, they are all uniformly equivalent to $d$ by translation-invariance. Since expansivity is preserved under uniformly equivalent metrics, the choice of the defining metric and, in particular, the choice of the sequence of seminorms $(p_k)_{k \geq 1}$, does not affect whether an operator $T$ on $X$ is expansive or positively expansive. Therefore, fixed any sequence of seminorms defining the topology of $X$, we can study Question~\ref{Ques:expansive} with respect to the canonical metric \eqref{eq:metric} without loss of generality.

Moreover, given any translation-invariant metric $d$ inducing the topology of a metrizable topological vector space $X$, using linearity it is clear that an operator $T$ is (positively) expansive on $X$ if and only if there is some $c>0$ such that for every $x \in X \setminus \{0_X\}$ there exists $n \in \ZZ$ ($n \in \NN_0$) satisfying $d(T^nx,0_X) \geq c$. Using this characterization and the metric \eqref{eq:metric} it follows that, if a Fr\'echet space $X$ admits a continuous norm, then $T := 2 I_X$ is positively expansive (and hence expansive): if $p_{k_0}$ is a continuous norm on $X$, then for any $x \in X \setminus \{0_X\}$ we have that
\[
d(T^nx,0_X) = d(2^nx,0_X) \geq 2^{-k_0} \min\{ 1 , 2^n p_{k_0}(x) \} = 2^{-k_0}
\]
for all sufficiently large $n \in \NN_0$. Thus, $c := 2^{-k_0}$ serves as a positive expansivity constant for $2 I_X$. Consequently, Question~\ref{Ques:expansive} can be reformulated as follows: \textit{Does every Fr\'echet space without continuous norm support an expansive operator?}

In the second section of this paper we show that there is no expansive or positively expansive operator on the Fr\'echet space of all sequences $\omega$, answering Question~\ref{Ques:expansive} in the negative. This result has also been obtained recently and independently by Wu and Wang \cite{WuWang2026_arXiv_shadowing} by working with a completely different approach relying on the notion of modules. Our method differs from that used in \cite{WuWang2026_arXiv_shadowing} by focusing on the existence of orbits remaining cofinitely inside some fixed closed subspace of finite codimension in $\omega$. This technique may have its own interest.

In the third section we then establish the existence of expansive operators for several Fr\'echet spaces without continuous norm. In particular, we establish some sufficient conditions for a Banach space $X$ to fulfill that the direct sum $X \oplus \omega$ does admit an invertible positively expansive and hence expansive operator. We end the paper showing that some infinite $\omega$-products of infinite-dimensional Banach spaces also support invertible positively expansive and hence expansive operators.

\section{Non-existence of expansive operators on $\omega$}

We recall that $\omega = \KK^{\NN}$ is the Fr\'echet space formed by all the real or complex sequences endowed with the topology of the coordinate convergence. Such topology is induced by the increasing sequence of seminorms $(p_k)_{k \geq 1}$, defined for each positive integer $k \in \NN$ as
\begin{equation}\label{eq:omega.seminorms}
	p_k(x) := \max_{1 \leq j \leq k} \left| [x]_j \right| \quad \text{ for each } x=([x]_j)_{j \geq 1} \in \omega.
\end{equation}
We also recall that the \textit{valuation} of a non-zero vector $x \in \omega \setminus \{0_{\omega}\}$, from now on denoted by $\val(x)$, is defined as the smallest index $j \geq 1$ such that $[x]_j \neq 0$. We let $\val(0_{\omega})=\infty$. We can then define the valuation of a subspace $M \subset \omega$ as follows:
\[
\val(M) := \min\{ \val(x) : x \in M \}.
\]
In particular, $\val(M) = \infty$ if and only if $M = \{0_{\omega}\}$. Moreover, using that the topology of $\omega$ only depends on the coordinate convergence, the reader should notice and we will repeatedly use the following fact:
\begin{enumerate}[label=$\bullet$]
	\item \textit{every sequence $(f_n)_{n \geq 1}$ of vectors in $\omega$ such that its sequence $(\val(f_n))_{n \geq 1}$ tends to infinity fulfills that the series $\sum_{n=1}^{\infty} f_n$ is convergent in $\omega$.}
\end{enumerate}
With these notations, and recalling that our objective is proving that there is no expansive or positively expansive operator on $\omega$, our approach will be based on the following three observations:
\begin{enumerate}[label=\arabic*.]
	\item Given the metric $d$ on $\omega$ defined by \eqref{eq:metric} with respect to the the seminorms \eqref{eq:omega.seminorms}, it is easy to notice that if $\val(x) > k$ then $d(x,0_{\omega}) \leq 2^{-k}$.
	
	\item Thus, to guarantee the existence of a non-zero vector $x \in \omega \setminus \{0_{\omega}\}$ fulfilling that $d(T^nx,0_{\omega}) \leq 2^{-k}$ for all $n \in \ZZ$ (or all $n \in \NN$), it suffices to show that there exists a vector $x$ such that $\val(T^nx) > k$ for all $n \in \ZZ$ (or all $n \in \NN$).
	
	\item Since the set $\ker(p_k) = \{ x \in \omega : p_k(x) = 0 \}$ of sequences with valuation bigger than a certain positive integer $k \in \NN$ is a closed subspace of finite codimension in~$\omega$, it is then natural to investigate when the orbit of a vector remains cofinitely in a given closed subspace of finite codimension.
\end{enumerate}
The following proposition will help us to deal with this study:

\begin{prop}\label{Pro:F_n+E_n}
	Let $M$ be a closed subspace of $\omega$. Then, for every non-increasing sequence $(E_n)_{n \geq 1}$ of closed subspaces of finite codimension in $M$, there exists a sequence of finite-dimensional subspaces $(F_n)_{n \geq 1}$ such that:
	\begin{enumerate}[label=$\bullet$]
		\item $M=F_1\oplus E_1$ and $E_n=F_{n+1}\oplus E_{n+1}$ for every $n \geq 1$;
		
		\item the sequence $(\val(F_n))_{n \geq 1}$ tends to infinity and hence every series $\sum_{n=1}^{\infty} f_n$ is convergent whenever $(f_n)_{n \geq 1}$ is a sequence with $f_n \in F_n$ for each $n \geq 1$;
		
		\item for every $x \in M$ there exist a unique sequence $(f_n)_{n \geq 1}$, with $f_n\in F_n$ for each $n \geq 1$, and a unique vector $e \in \bigcap_{n \geq 1} E_n$ such that $x = \sum_{n=1}^{\infty} f_n + e$.
\end{enumerate}
\end{prop}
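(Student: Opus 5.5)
The plan is to construct the complements $F_n$ by a "lowest valuation first" choice, so that each $F_{n+1}$ consists of vectors of large valuation, and then to read off uniqueness from closedness of the $E_n$. Set $E_0:=M$.

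\textbf{Construction of $F_n$.} Since $E_n$ has finite codimension in $E_{n-1}$, we need a finite-dimensional complement of $E_n$ in $E_{n-1}$ of large valuation. The key tool is a reduced ("echelon") basis adapted to valuations. For a closed subspace $N\subset\omega$, let $V(N):=\{\val(x): x\in N\setminus\{0_\omega\}\}$. For each $j\in V(N)$ there is some $x\in N$ with $\val(x)=j$ and $[x]_j=1$. Given $E_n\subset E_{n-1}$ of finite codimension, we have $V(E_n)\subset V(E_{n-1})$. The map $E_{n-1}/E_n\to\omega$ is not directly useful, so we argue by indices instead. Choose $j_1<\dots<j_r$ in $V(E_{n-1})\setminus V(E_n)$. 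A short finite-dimensional argument shows that this set has exactly $\dim(E_{n-1}/E_n)$ elements. Finiteness follows from a Gaussian elimination argument on any finitely many vectors with distinct valuations: such vectors are linearly independent modulo $E_n$. For the reverse inequality, closedness of $E_n$ is needed: any $x\in E_{n-1}$ can be reduced modulo $E_n$ by iteratively subtracting elements of $E_n$ to kill coordinates with indices in $V(E_n)$, and the resulting convergent series (increasing valuations) lies in $E_n$ because $E_n$ is closed. Let $F_n:=\lspan\{g_{j_1},\dots,g_{j_r}\}$ with $g_{j}\in E_{n-1}$, $\val(g_j)=j$. Then $E_{n-1}=F_n\oplus E_n$ by the dimension count and independence.

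\textbf{Valuations tend to infinity.} We have $\val(F_n)=\min(V(E_{n-1})\setminus V(E_n))$. The sets $V(E_{n-1})\setminus V(E_n)$ are pairwise disjoint subsets of $\NN$, since $V$ is monotone. Hence their minima tend to infinity, except that some sets may be empty; those give $F_n=\{0_\omega\}$ with valuation $\infty$, which is harmless. Convergence of $\sum f_n$ then follows from the fact stated before the proposition.

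\textbf{Decomposition and uniqueness.} Given $x\in M$, iterate $x=f_1+x_1$ with $x_1\in E_1$, then $x_1=f_2+x_2$, and so on. Since $\val(f_n)\to\infty$, the series $s=\sum f_n$ converges, and $e:=x-s=\lim x_n$. We have $x_m\in E_n$ for all $m\ge n$, so closedness gives $e\in\bigcap_n E_n$.

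For uniqueness, suppose $\sum f_n+e=0$. Then for each $N$,
\[
f_1+\dots+f_N=-\Bigl(\sum_{n>N}f_n+e\Bigr)\in E_N,
\]
using again that $E_N$ is closed and that the tail partial sums lie in $E_N$. Since $F_1\oplus\dots\oplus F_N\oplus E_N$ is direct, all $f_n=0$ and hence $e=0$.

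The main obstacle is the first step: showing that the valuation-based complement really spans modulo $E_n$. This is where closedness of $E_n$ and the convergence fact are essential. I would first try the elimination-series argument described there.
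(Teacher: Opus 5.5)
Your proof is correct, but you build the complements differently from the paper. The paper first takes arbitrary finite-dimensional complements $E_{n-1}=F_n\oplus E_n$ and then corrects them valuation level by valuation level. For each $k$ that equals $\val(F_n)$ for infinitely many $n$, it takes a basis of each such $F_{l_j}$ with a single vector of valuation $k$. It replaces that vector by its combination with a vector of $E_{l_j}$ of the same valuation; such a vector exists because $F_{l_{j+1}}\subset E_{l_j}$. The complement property is preserved automatically, but the paper must iterate over all $k$ and make sure each $F_n$ eventually stabilizes.

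You instead choose a canonical ``echelon'' complement indexed by the pivot set $V(E_{n-1})\setminus V(E_n)$. This costs you a spanning lemma: the elimination series, whose $E_n$-part converges and stays in $E_n$. That is where you genuinely use that $E_n$ is closed, whereas the paper's correction step uses only algebraic complements. In return you get the exact formulas $\dim F_n=|V(E_{n-1})\setminus V(E_n)|$ and $\val(F_n)=\min\bigl(V(E_{n-1})\setminus V(E_n)\bigr)$. Then $\val(F_n)\to\infty$ follows in one line from the pairwise disjointness of these index sets, with none of the paper's diagonal bookkeeping.

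When you write up the spanning step, state explicitly what is subtracted at each stage. If the current valuation lies in $V(E_{n-1})\setminus V(E_n)$, you subtract a multiple of the corresponding $g_j$, which happens only finitely many times. Otherwise you subtract an element of $E_n$.

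Your existence and uniqueness arguments for the decomposition $x=\sum f_n+e$ match the paper's.
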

\begin{proof}
	We let $E_0=M$. Fixed any $n \geq 0$, the assumptions imply that $E_{n+1}$ is a closed subspace of finite codimension in $E_n$, so that for each $n \geq 0$ there exists a finite dimensional subspace $F_{n+1}$ fulfilling that $E_n = F_{n+1} \oplus E_{n+1}$.
	
	We \textit{remark} that the subspaces $(F_n)_{n \geq 1}$ can be chosen so that its sequence of valuations $(\val(F_n))_{n \geq 1}$ tends to infinity and thus such that every series $\sum_{n=0}^{\infty} f_n$ converges whenever $f_n \in F_n$ for each $n \geq 1$. Indeed, if we consider $k \geq 1$ and $n_k \geq 1$ and we assume that $\val(F_n) \geq k$ for every $n \geq n_k$, then we can show that there exist $n_{k+1} > n_k$ and a choice of $(F_n)_{n \geq n_{k+1}}$ such that $\val(F_n) \geq k+1$ for every $n \geq n_{k+1}$. There are in fact two possible cases:
	\begin{enumerate}[label=\arabic*.]
		\item There exists some $n_{k+1} > n_k$ such that $\val(F_{n}) \geq k+1$ for every $n \geq n_{k+1}$. In this case we have nothing to do.
		
		\item We have that $\val(F_n) = k$ for infinitely many integers $n > n_k$. In this case we enumerate these indices by $(l_j)_{j \geq 1}$ and, recursively, we proceed as follows:
		\begin{enumerate}[label=$\bullet$]
			\item Consider a basis of $F_{l_1}$ containing only one vector $f_{l_1}$ of valuation equal to $k$.
			
			\item Using that $\val(F_{l_2})=k$ and that $F_{l_2} \subset E_{l_1}$, we can find a vector $e_{l_1} \in E_{l_1}$ with valuation equal to $k$.
			
			\item Let $F_{j_1}'$ be the finite dimensional subspace spanned by the basis of $F_{j_1}$ but replacing the vector $f_{l_1}$ by a linear combination of $f_{l_1}$ and $e_{l_1}$ such that its valuation is strictly bigger than $k$. Thus, $\val(F_{l_1}')>k$ and $E_{l_1-1} = F_{l_1}' \oplus E_{l_1}$.
		\end{enumerate}
		Repeating this argument for each $F_{l_j}$ with $j \geq 2$ we obtain a new sequence $(F_{l_j}')_{j \geq 1}$ of finite dimensional subspaces with $\val(F_{l_j}') > k$ and $E_{l_{j-1}} = F_{l_j}' \oplus E_{l_j}$ for all $j \geq 1$. Renaming these spaces by $(F_{l_j})_{j \geq 1}$ and letting $n_{k+1} := n_k+1$ we finally get that $\val(F_n) \geq k+1$ for all $n \geq n_{k+1}$ and that we still have the equality $E_n = F_{n+1} \oplus E_{n+1}$ for all $n \geq 0$, completing the \textit{remark}.\\
	\end{enumerate}
	
	Finally, pick any $x \in M$ and let us show that there exist a unique sequence $(f_n)_{n \geq 1}$, with $f_n \in F_n$ for each $n \in \NN$, and a unique vector $e \in \bigcap_{n \geq 1} E_n$ such that $x = \sum_{n=1}^{\infty} f_n + e$. Indeed, by using the fact that $M = F_1 \oplus E_1$ we get unique vectors $f_1 \in F_1$ and $e_1 \in E_1$ such that $x = f_1 + e_1$. Then, since $E_n=F_{n+1}\oplus E_{n+1}$ for every $n \geq 1$, we know that for each $e_n \in E_n$ there exist a unique $f_{n+1} \in F_{n+1}$ and a unique $e_{n+1} \in E_{n+1}$ such that $e_n = f_{n+1} + e_{n+1}$. We thus obtain sequences $(f_n)_{n \geq 1}$ and $(e_n)_{n \geq 1}$, with $f_n \in F_n$ and $e_n \in E_n$ for each $n \in \NN$, such that
	\[
	x = \sum_{k=1}^n f_k + e_n \quad \text{ for every } n \geq 1.
	\]
	Since the series $\sum_{k=1}^{\infty}f_k$ converges, we conclude that the sequence $(e_n)_{n \geq 1}$ is also convergent to some vector $e \in \omega$ and that
	\[
	x = \sum_{k=1}^{\infty} f_k + e.
	\]
	Finally, since $(e_n)_{n \geq 1}$ is a convergent sequence and $(E_n)_{n \geq 1}$ is a non-increasing sequence of closed subspaces, we get that $e \in  \bigcap_{n \geq 1} E_n$. Let us remark that we have unicity of both $(f_n)_{n \geq 1}$ and $e$ since, for each $n \geq 1$, the expression
	\[
	x = \sum_{k=1}^{n}f_k + \left( \sum_{k=n+1}^{\infty} f_k + e \right)
	\]
	has to be the unique decomposition of $x$ in the direct sum $\left( \oplus_{k=1}^n F_k \right) \oplus E_n$.
\end{proof}

For an operator $T$ on $\omega$ and a closed subspace of finite codimension $E \subset \omega$, we can use Proposition~\ref{Pro:F_n+E_n} on the non-increasing sequence of finite-codimensional subspaces $E_n = \bigcap_{k=0}^{n-1} T^{-k}(E)$ to establish the following result:

\begin{theorem}\label{The:intersect-n>=1}
	For every operator $T$ on $\omega$ and every closed subspace of finite codimension $E \subset \omega$ we have that
	\[
	\bigcap_{n \geq 1} T^{-n}(E) \neq \{0_{\omega}\}.
	\]
\end{theorem}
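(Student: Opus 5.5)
We argue by contradiction and never need the explicit coordinates of $\omega$, beyond the convergence statement in Proposition~\ref{Pro:F_n+E_n} and the fact that $\omega$ is infinite-dimensional. Put
\[
E_n := \bigcap_{k=1}^{n} T^{-k}(E) \quad \text{ for } n \geq 1, \qquad E_0 := \omega .
\]
This is a non-increasing sequence of closed subspaces of finite codimension. Assume that $E_\infty := \bigcap_{n\geq 1} E_n = \{0_\omega\}$.

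\textbf{Step 1: the graded quotients.} Since $T(E_{n+1}) \subset E_n$, the operator $T$ induces linear maps $E_{n+1}/E_{n+2} \to E_n/E_{n+1}$. These maps are injective. Indeed, take $x\in E_{n+1}$ with $Tx\in E_{n+1}$. From $x\in E_{n+1}$ we get $T^kx\in E$ for $1\le k\le n+1$. From $Tx\in E_{n+1}$ we get $T^{k}x\in E$ for $2\le k\le n+2$. Hence $x\in E_{n+2}$.

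Consequently the dimensions $d_n:=\dim(E_n/E_{n+1})\le \operatorname{codim}E$ are non-increasing, so they are eventually equal to some $d$, say for $n\ge N$, and the induced maps are then isomorphisms.
\begin{enumerate}[label=$\bullet$]
\item If $d=0$, the sequence $(E_n)$ is eventually constant. Then $E_\infty=E_N$ has finite codimension in the infinite-dimensional space $\omega$, so $E_\infty \neq \{0_\omega\}$, which is a contradiction.
\item So we may assume $d\ge 1$.
\end{enumerate}

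\textbf{Step 2: $T$ maps $E_{N+1}$ onto $E_N$.} Apply Proposition~\ref{Pro:F_n+E_n} to $M=E_N$ and the sequence $(E_n)_{n> N}$, which gives finite-dimensional complements $F_n$ with $\val(F_n)\to\infty$. Given $y\in E_N$, we solve $Tx=y$ recursively.
\begin{enumerate}[label=$\bullet$]
\item Using surjectivity of $E_{N+1}/E_{N+2}\to E_N/E_{N+1}$, choose $x_1\in F_{N+1}$ with $y-Tx_1\in E_{N+1}$.
\item Then choose $x_2\in F_{N+2}$ with $y-Tx_1-Tx_2\in E_{N+2}$, and continue in this way.
\end{enumerate}
By the Proposition, $x=\sum_j x_j$ converges. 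By continuity of $T$ and closedness of the $E_n$, we get $y-Tx\in E_\infty=\{0_\omega\}$. Hence $T(E_{N+1})=E_N$.

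Moreover $\ker T\subset E_\infty$, since $Tz=0$ forces $T^kz=0\in E$ for all $k\ge1$. Therefore $T$ is injective.

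\textbf{Step 3: the codimension contradiction.} We claim $T^{-1}(E_N)=E_{N+1}$.
\begin{enumerate}[label=$\bullet$]
\item The inclusion $E_{N+1}\subset T^{-1}(E_N)$ is clear.
\item Conversely, let $Th\in E_N$. By Step 2 there is $s\in E_{N+1}$ with $Ts=Th$. Injectivity of $T$ then gives $h=s\in E_{N+1}$.
\end{enumerate}
It follows that $T$ induces an injective linear map $\omega/E_{N+1}\to\omega/E_N$. Hence
\[
\operatorname{codim}E_{N+1}\le\operatorname{codim}E_N .
\]
On the other hand, $E_{N+1}\subsetneq E_N$ with quotient of dimension $d\ge1$, so $\operatorname{codim}E_{N+1}=\operatorname{codim}E_N+d>\operatorname{codim}E_N$. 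This is a contradiction, so $E_\infty\neq\{0_\omega\}$.

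\textbf{Main obstacle.} The delicate step is Step 2. It is the only place where the structure of $\omega$ enters, through the convergence of $\sum x_j$ provided by Proposition~\ref{Pro:F_n+E_n}. Some care is needed to apply the Proposition inside $M=E_N$ and to ensure the recursive choices can always be made in $F_{N+j}$. The remaining steps are finite-dimensional bookkeeping.
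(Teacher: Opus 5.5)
Your argument is correct and is essentially the paper's proof. It uses the same filtration by finite intersections of preimages of $E$, the same use of Proposition~\ref{Pro:F_n+E_n} to solve $Tx=y$ by a convergent series once the graded dimensions stabilise, and a final finite-dimensional clash with injectivity. One labelling point: $x_j$ must lie in a complement of $E_{N+j+1}$ inside $E_{N+j}$, which is $F_{N+j+1}$ in the Proposition's labelling, not $F_{N+j}$.

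Your streamlining is worth keeping. Working with $\bigcap_{k=1}^{n}T^{-k}(E)$ from the start absorbs the non-injective case via $\ker T\subset E_\infty$ instead of Remark~\ref{Rem:injectivity}. Your injectivity of the graded maps does not need $T$ injective. Finally, $T^{-1}(E_N)=E_{N+1}$ plus the codimension count replaces the paper's two-case analysis of $P_N\circ T$.
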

\begin{remark}\label{Rem:injectivity}
	Before proving Theorem~\ref{The:intersect-n>=1} let us remark that, since 
	\[
	\bigcap_{n \geq 1}T^{-n}(E) = T^{-1}\left(\bigcap_{n \geq 0}T^{-n}(E)\right),
	\]
	it is straightforward to verify that
	\[
	\bigcap_{n \geq 1}T^{-n}(E) \neq \{0_{\omega}\} \quad \Leftrightarrow \quad \text{$T$ is not injective \ or \ } \bigcap_{n \geq 0}T^{-n}(E) \neq \{0_{\omega}\}.
	\]
\end{remark}
\begin{proof}[Proof of Theorem \ref{The:intersect-n>=1}]
	Let $T$ be an operator on $\omega$ and let $E$ be a closed subspace of finite codimension in $\omega$. If $T$ is not injective then we directly get that
	\[
	\{0_{\omega}\} \neq \ker(T) \subset \bigcap_{n \geq 1}T^{-n}(E).
	\]
	Thus, assume that $T$ is injective and notice that, by Remark~\ref{Rem:injectivity}, to complete the proof we must show that $\bigcap_{n \geq 0} T^{-n}(E) \neq \{0_{\omega}\}$. To this end we let $E_0 := \omega$ and
	\[
	E_n := E \cap T^{-1}(E) \cap \cdots \cap T^{-(n-1)}(E) \quad \text{ for each } n \geq 1,
	\]
	so that $(E_n)_{n \geq 1}$ is indeed a non-increasing sequence of closed subspaces of finite codimension in $\omega$. Using Proposition~\ref{Pro:F_n+E_n} we can find a sequence of finite-dimensional subspaces $(F_n)_{n \geq 1}$ such that:
	\begin{enumerate}[label=$\bullet$]
		\item $\omega=F_1\oplus E_1$ and $E_n=F_{n+1}\oplus E_{n+1}$ for every $n \geq 1$;
		
		\item the sequence $(\val(F_n))_{n \geq 1}$ tends to infinity and hence every series $\sum_{n=1}^{\infty} f_n$ is convergent whenever $(f_n)_{n \geq 1}$ is a sequence with $f_n \in F_n$ for each $n \geq 1$;
		
		\item for every $x \in \omega$ there exist a unique sequence $(f_n)_{n \geq 1}$, with $f_n\in F_n$ for each $n \geq 1$, and a unique vector $e \in \bigcap_{n \geq 1} E_n$ such that $x = \sum_{n=1}^{\infty} f_n + e$.\\
	\end{enumerate}
	
	We start noticing that, if there exists $n \geq 0$ such that $E_n=E_{n+1}$, then $E_k = E_n$ for all $k \geq n$. Indeed, if $E_n = E_{n+1}$ and if we consider $k \geq n$ and $x \in E_k$ then we have that $T^{k-n}x \in E_n = E_{n+1}$ and thus we have that $T^nT^{k-n}x = T^kx \in E$. It follows that $x \in E_{k+1}$ and thus that $E_k = E_{k+1}$ for all $k \geq n$. Therefore, if there exists $n \geq 0$ such that $E_n = E_{n+1}$ we would have that
	\[
	\bigcap_{k \geq 0} T^{-k}(E) = E_n \neq \{0_{\omega}\}.
	\]
	Hence, assume that $E_{n+1}$ is a strict subspace of $E_n$ for all $n \geq 0$, which implies that $F_{n+1} \neq \{0_{\omega}\}$ for all $n \geq 0$. Let us check that there exists $N \geq 1$ such that
	\begin{equation}\label{eq:dimF_N}
		\dim(F_n) = \dim(F_N) \quad \text{ for all } n \geq N.
	\end{equation}
	To show this fact notice that given any $n \geq 0$, since $E_{n+1} = F_{n+2}\oplus E_{n+2}$, we have that each $x \in F_{n+2} \setminus \{0_{\omega}\}$ belongs to $E_{n+1}$ but does not belong to $E_{n+2}$, implying that $Tx \in E_n \setminus E_{n+1}$ and hence that $T(F_{n+2}) \subset E_n$ while $T(F_{n+2}) \cap E_{n+1} = \{0_{\omega}\}$. Since $E_n = F_{n+1} \oplus E_{n+1}$, it follows that $\dim(T(F_{n+2})) \leq \dim(F_{n+1})$. Moreover, since we are assuming that $T$ is injective, we deduce that
	\[
	\dim(F_{n+2}) = \dim(T(F_{n+2})) \leq \dim(F_{n+1}).
	\]
	This argument shows that $(\dim(F_n))_{n \geq 1}$ is indeed a decreasing sequence of positive integers, so that there exists some $N \geq 1$ fulfilling \eqref{eq:dimF_N}.
	
	Now we claim that, given any $n \geq N$ and any $x \in E_n$, there exists $(f_k)_{k \geq n+2}$, with $f_k \in F_k$ for each $k \geq n+2$, and a vector $e \in \bigcap_{k \geq 0} T^{-k}(E)$ such that
	\begin{equation}\label{eq:x=T(sum)+e}
		x = T\left( \sum_{k=n+2}^{\infty} f_k \right) + e.
	\end{equation}
	Actually, since $x \in E_n$ we have that $x = f_{n+1} + e_{n+1}$ for some $f_{n+1} \in F_{n+1}$ and some $e_{n+1} \in E_{n+1}$. Then, using that $T(F_{n+2}) \subset E_n =  F_{n+1} \oplus E_{n+1}$, that $T(F_{n+2}) \cap E_{n+1} = \{0_{\omega}\}$ and that $\dim(T(F_{n+2})) = \dim(F_{n+2}) = \dim(F_{n+1})$, there exists some $f_{n+2} \in F_{n+2}$ such that $Tf_{n+2}-f_{n+1} \in E_{n+1}$. If this was not the case, then the finite-dimensional space $F_{n+1}' := T(F_{n+2}) \oplus \lspan(f_{n+1})$ would fulfill that $\dim(F_{n+1}') = \dim(T(F_{n+2}))+1 > \dim(F_{n+1})$ while $F_{n+1}' \cap E_{n+1} = \{0_{\omega}\}$, which contradicts that $\dim(F_{n+1})$ is the codimension of $E_{n+1}$ as a subspace of $E_n$. It follows that $x=Tf_{n+2}+e'_{n+1}$ for some $e'_{n+1}\in E_{n+1}$. Repeating the argument for the vector $e_{n+1}' \in E_{n+1}$ and continuing recursively we get a sequence $(f_k)_{k \geq n+2}$, with $f_k \in F_k$ for each $k \geq n+2$, and a sequence $(e'_k)_{k \geq n+1}$, with $e'_k \in E_k$ for each $k \geq n+1$, such that for every $K \geq n+2$ we have the equality
	\[
	x = \sum_{k=n+2}^{K} Tf_k + e'_{K-1} = T\left( \sum_{k=n+2}^{K} f_k \right) + e'_{K-1} .
	\]
	Since $\sum_{k=n+2}^{\infty}f_{k}$ is convergent by Proposition~\ref{Pro:F_n+E_n}, we finally obtain \eqref{eq:x=T(sum)+e}, where
	\[
	e = \lim_{k \to \infty} e'_k \in \bigcap_{k \geq 0}T^{-k}(E).
	\]
	In particular, \eqref{eq:x=T(sum)+e} implies that $E_n \subset T(E_{n+1}) \oplus \bigcap_{k \geq 0}T^{-k}(E)$ for every $n \geq N$.
	
	Finally, by contradiction, assume that $\bigcap_{k \geq 0}T^{-k}(E)=\{0_{\omega}\}$. Then, by the previous arguments we would have that $E_n \subset T(E_{n+1})$ for every $n \geq N$. Consider the operator $T_N : \oplus_{k=1}^N F_k \longrightarrow \oplus_{k=1}^N F_k$ defined by $T_N := P_N \circ T$, where $P_N$ is the projection of $\omega = (\oplus_{k=1}^N F_k) \oplus E_N$ onto $\oplus_{k=1}^N F_k$. We now have two cases:
	\begin{enumerate}[label=\arabic*.]
		\item If $T_N$ was invertible, then we would have that $T( (\oplus_{k=1}^N F_n) \oplus E_{N+1}) = \omega$. Hence, for every non-zero vector $x \in F_{N+1} \setminus \{0_{\omega}\}$ there would exist another vector $y \in \oplus_{k=1}^N F_n \oplus E_{N+1}$ fulfilling that $Tx = Ty$ while $x \neq y$.
		
		\item If $T_N$ was not invertible, then there would exist a non-zero vector $x \in \oplus_{k=1}^N F_k$ such that $T_Nx=0_{\omega}$. This would mean that $Tx \in E_N$, so that we could find another vector $y \in E_{N+1}$ fulfilling that $Tx = Ty$ while $x \neq y$. 
	\end{enumerate}
	In both cases we get a contradiction with the injectivity of $T$. 
\end{proof}

By Theorem~\ref{The:intersect-n>=1}, given any operator $T$ on $\omega$ and a closed subspace of finite codimension $E \subset \omega$, then the closed subspaces
\[
M_1 := \bigcap_{n \geq 1}T^{-n}(E) \quad \text{ and } \quad M_0 := \bigcap_{n \geq 0}T^{-n}(E) \subset E
\]
are invariant under the action of $T$. It follows from Theorem~\ref{The:intersect-n>=1} that $M_1$ is never reduced to $\{0_{\omega}\}$, but it can clearly be the whole space even if $E$ is different to the whole space $\omega$: consider the null operator on $\omega$ for any $E$, or the forward shift on $\omega$ for the subspace of finite codimension $E = \ker(p_1) = \{ x = ([x]_j)_{j \geq 1} \in \omega : [x]_1 = 0 \}$.

On the other hand, $M_0$ is never equal to $\omega$ unless we consider $E=\omega$, but it can be equal to $\{0_{\omega}\}$ if $T$ is not injective: the backward shift on $\omega$ does not admit a non-trivial closed invariant subspace included in $E = \ker(p_1)$. However, if $T$ is injective and $E$ is different to $\omega$, then $M_0$ is a non-trivial closed invariant subspace for $T$. This gives us the following consequence:

\begin{corollary}
	Let $T$ be an operator on $\omega$. If $T$ is injective and $E$ is a closed subspace of finite codimension in $\omega$, then the operator $T$ admits a non-trivial closed invariant subspace included in $E$.
\end{corollary}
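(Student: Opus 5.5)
Set $M_0 := \bigcap_{n \geq 0} T^{-n}(E)$. We show that $M_0$ is the required subspace; all the work lies in Theorem~\ref{The:intersect-n>=1}. First, $M_0$ is closed, since each $T^{-n}(E)$ is the preimage of the closed subspace $E$ under the continuous operator $T^n$. Second, $M_0 \subset E$, because the term $n=0$ in the intersection is $E$ itself. Third, $M_0$ is invariant under $T$: if $x \in M_0$ then $T^n(Tx) = T^{n+1}x \in E$ for every $n \geq 0$, so $Tx \in M_0$.

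Non-triviality has two sides. For $M_0 \neq \omega$ we may assume $E \neq \omega$; this is implicit in the statement, since for $E = \omega$ every non-trivial closed invariant subspace lies in $E$. For instance $\ker(p_1)$ contains a non-trivial closed invariant subspace only for some operators, but when $E=\omega$ the real content is existence, which we also obtain from $M_0$ with a proper $E' \subset \omega$ of finite codimension. With $E \neq \omega$ we get $M_0 \subset E \subsetneq \omega$.

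For $M_0 \neq \{0_{\omega}\}$ we use injectivity. Theorem~\ref{The:intersect-n>=1} gives $\bigcap_{n \geq 1} T^{-n}(E) \neq \{0_{\omega}\}$. By Remark~\ref{Rem:injectivity} this is equivalent to $T$ being non-injective or $M_0 \neq \{0_{\omega}\}$. Since $T$ is injective, $M_0 \neq \{0_{\omega}\}$.

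The only delicate point is the degenerate case $E = \omega$. There I would replace $E$ by any proper closed subspace of finite codimension, such as $\ker(p_1)$, and run the same argument to produce a non-trivial closed invariant subspace, which is then trivially contained in $E=\omega$. Everything else is a direct application of Theorem~\ref{The:intersect-n>=1} and Remark~\ref{Rem:injectivity}, so I expect no real obstacle.
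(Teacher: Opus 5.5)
Your proof is correct and is essentially the paper's argument: $M_0=\bigcap_{n\ge 0}T^{-n}(E)$ is closed, $T$-invariant and contained in $E$; it is non-zero by Theorem~\ref{The:intersect-n>=1} combined with Remark~\ref{Rem:injectivity} and injectivity; and it is proper once $E\neq\omega$. Your handling of the degenerate case $E=\omega$ via $\ker(p_1)$ is a harmless addition that the paper leaves implicit, but the aside that $\ker(p_1)$ contains such a subspace ``only for some operators'' is beside the point (it can fail only for non-injective $T$, e.g.\ the backward shift) and can be dropped.
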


Since every non-injective operator clearly admits a non-trivial closed invariant subspace (consider its kernel), we can also deduce from Theorem~\ref{The:intersect-n>=1} the existence of non-trivial closed invariant subspaces for any operator $T$ on $\omega$. This was already remarked by K\"orber in~\cite{Korber1969_MathAnn_die-invarianten} for operators on $\omega$, and later by the second author of this paper in~\cite{Menet2018_AdvM_invariant} for $X\oplus\omega$ where $X$ is a Fr\'echet space with continuous norm.

Coming back to our expansivity problem (Question~\ref{Ques:expansive}), using Theorem~\ref{The:intersect-n>=1} we can already show the absence of positively expansive operators on $\omega$:

\begin{theorem}\label{The:no.pos-exp.omega}
	There is no (metrically) positively expansive operator on $\omega$.
\end{theorem}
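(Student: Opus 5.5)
The plan is to show that no constant $c>0$ can serve as a positive expansivity constant. By the reduction in the introduction, $T$ is positively expansive on $\omega$ if and only if there is some $c>0$ such that every $x\neq 0_\omega$ has some $n\in\NN_0$ with $d(T^nx,0_\omega)\geq c$. So we fix an arbitrary operator $T$ on $\omega$ and an arbitrary $c>0$, and we look for a nonzero $x$ whose whole forward orbit stays in the $d$-ball of radius $c$.

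First choose $k\in\NN$ with $2^{-k}<c$, and let $E:=\ker(p_k)$. This is a closed subspace of finite codimension $k$ in $\omega$. By the first of the three observations listed before Proposition~\ref{Pro:F_n+E_n}, every $y\in E$ has $\val(y)>k$ and therefore $d(y,0_\omega)\leq 2^{-k}<c$.

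Next apply Theorem~\ref{The:intersect-n>=1} to $T$ and $E$. It gives a nonzero vector $z\in\bigcap_{n\geq 1}T^{-n}(E)$, so $T^nz\in E$ for all $n\geq 1$. The case $n=0$ is not covered by this, so we handle it by shifting: set $x:=Tz$.
\begin{itemize}
\item If $Tz\neq 0_\omega$, then $x\neq 0_\omega$ and $T^nx=T^{n+1}z\in E$ for all $n\geq 0$. Hence $d(T^nx,0_\omega)<c$ for every $n\in\NN_0$.
\item If $Tz=0_\omega$, then $T$ is not injective. Taking $x:=z\neq 0_\omega$ (or any nonzero element of $\ker T$) gives $T^nx=0_\omega$ for $n\geq 1$. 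For $n=0$ we still need $d(x,0_\omega)<c$, which fails if $\val(z)\leq k$.
\end{itemize}

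The second case is where a little care is needed. Since $\ker T$ is a nonzero closed subspace, we can instead take a nonzero $x\in\ker T$ that is also small. One option is to apply Theorem~\ref{The:intersect-n>=1} differently. A more direct option: $\ker T\cap E$ has finite codimension in $\ker T$, so if $\ker T$ is infinite-dimensional this intersection is nonzero and we pick $x$ there. If $\ker T$ is finite-dimensional but nonzero, this argument does not apply.

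The cleanest fix is to avoid the case split altogether and use Remark~\ref{Rem:injectivity}. If $T$ is injective, the proof of Theorem~\ref{The:intersect-n>=1} actually yields $\bigcap_{n\geq 0}T^{-n}(E)\neq\{0_\omega\}$, and any nonzero $x$ in this intersection has $T^nx\in E$ for all $n\geq 0$. If $T$ is not injective, take $0\neq z\in\ker T$. For each $m\geq 1$, the scalar multiple $x=\lambda z$ with $\lambda\neq 0$ small enough that $p_k(\lambda z)<c\,2^{-1}$ gives $d(x,0_\omega)\leq\sum_{j\leq k}2^{-j}p_k(\lambda z)+2^{-k}<c$, provided $c$ was chosen with a margin (take $2^{-k}<c/2$). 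Then $T^nx=0_\omega$ for $n\geq1$. In both cases the forward orbit of a nonzero vector stays within distance $c$ of $0_\omega$.

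Since $c>0$ was arbitrary, $T$ is not positively expansive. The main point to check is the estimate of $d(x,0_\omega)$ for a small multiple in the non-injective case; everything else follows directly from Theorem~\ref{The:intersect-n>=1}.
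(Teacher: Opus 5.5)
Your proof is correct and is essentially the paper's: you apply Theorem~\ref{The:intersect-n>=1} to $E=\ker(p_k)$, use $d(y,0_{\omega})\leq 2^{-k}$ for $y\in E$, and control the $n=0$ term separately. The paper avoids your injective/non-injective split by rescaling directly: since $\bigcap_{n\geq 1}T^{-n}(E)$ is a subspace, any nonzero $z$ in it can be multiplied by a small scalar so that $p_k(z)<2^{-k}$ while keeping $T^nz\in E$ for all $n\geq 1$ (the same scaling you used only for kernel vectors); the paper then adds a short ball-inclusion remark to cover arbitrary compatible metrics $d'$.
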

\begin{proof}
	Let $T$ be an operator on $\omega$. Given $k \in \NN$, let
	\[
	E_k := \ker(p_k) = \{ x = ([x]_j)_{j \geq 1} \in \omega : [x]_j = 0 \text{ for } 1 \leq j \leq k \}.
	\]
	It follows from Theorem~\ref{The:intersect-n>=1} applied to the finite-codimensional subspace $E_k$ that there exists some $x \in \omega \setminus \{0_{\omega}\}$ such that $p_{k}(T^nx) = 0$ for all $n \in \NN$. By considering an enough small multiple of $x$ we can even assume that $p_{k}(x) < \frac{1}{2^{k}}$. Since $(p_l)_{l \geq 1}$ given by \eqref{eq:omega.seminorms} is an increasing sequence of seminorms, we have for the metric $d$ given by \eqref{eq:metric} and for all $n \in \NN_0$ that
	\[
	d(T^nx,0_{\omega}) = \sum_{l=1}^{\infty} \frac{\min\{ 1 , p_l(T^nx) \}}{2^{l}} \leq \frac{1}{2^{k}} \sum_{l=1}^{k} \frac{1}{2^l} + \sum_{l=k+1}^{\infty} \frac{1}{2^l} < \frac{1}{2^{k-1}}.
	\]
	Thus, for each $k \in \NN$ there exists some $x \in \omega \setminus \{0_{\omega}\}$ such that $d(T^nx,0_{\omega}) < 2^{-k+1}$ for every $n \in \NN_0$, which shows that $T$ is not positively expansive with respect to $d$.
	
	We also notice that $T$ is not positively expansive with respect to another metric on $\omega$. Actually, if $d'$ is another metric inducing the topology of $\omega$, the equivalence between $d$ and $d'$ implies that for every $c>0$ there exists some $\eps_c>0$ such that we have the open balls inclusion
	\[
	B_{d}(0_{\omega},\eps_c) \subset B_{d'}(0_{\omega},c).
	\]
	Since for every $c>0$ there exists some $x \in \omega \setminus \{0_{\omega}\}$ such that $d(T^nx,0_{\omega})<\eps_c$ for every $n \in \NN_0$, it follows that $d'(T^nx,0_{\omega})<c$ for every $n \in \NN_0$. Therefore we deduce that $T$ is not positively expansive with respect to $d'$ and thus that $\omega$ does not admit positively expansive operators (in the metric sense).
\end{proof}

In order to show the absence of expansive operators on $\omega$ we are going to adapt the statement and proof of Theorem~\ref{The:intersect-n>=1} to take into account the sets $T^{-n}(E)$ for every integer $n \in \ZZ$. Here we have the statement:

\begin{theorem}\label{The:intersect-|n|>=L}
	For every invertible operator $T$ on $\omega$ and every closed subspace of finite codimension $E \subset \omega$ there exists some $L \geq 0$ fulfilling that 
	\[
	\bigcap_{|n| \geq L}T^{-n}(E) \neq \{0_{\omega}\}.
	\]
\end{theorem}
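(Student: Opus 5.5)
The plan is to reduce the statement to the one-sided Theorem~\ref{The:intersect-n>=1}, applied to both $T$ and $T^{-1}$, and then to glue the forward and backward information together through a dimension-stabilization argument modelled on the proof of that theorem. Since $T$ is invertible it is injective, so by Remark~\ref{Rem:injectivity} and Theorem~\ref{The:intersect-n>=1} the closed subspace $M_+ := \bigcap_{n \geq 0} T^{-n}(E)$ is non-trivial. Applied to $T^{-1}$, the same results show that $M_- := \bigcap_{n \geq 0} T^{n}(E)$ is non-trivial. Both are contained in $E$, with $T(M_+) \subset M_+$ and $T^{-1}(M_-) \subset M_-$.

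First I handle the \emph{stationary} situations. For $L \geq 0$ put
\[
N_L := T^{-L}(M_+) = \{x \in \omega : T^nx \in E \text{ for all } n \geq L\}.
\]
This is a non-decreasing chain of closed subspaces, and $T^{-1}(N_L) = N_{L+1}$. Moreover
\[
M_+ = N_L \cap \bigcap_{0 \leq n < L} T^{-n}(E)
\]
is of finite codimension in $N_L$, so each quotient $N_{L+1}/N_L$ is finite-dimensional. Since $T$ maps $N_{L+1}$ into $N_L$ and $N_L$ into $N_{L-1}$, it induces an injection $N_{L+1}/N_L \to N_L/N_{L-1}$, exactly as in the proof of Theorem~\ref{The:intersect-n>=1}. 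Hence $\dim(N_{L+1}/N_L)$ is non-increasing and eventually equal to some constant $d_+$. If $N_{L} = N_{L+1}$ for some $L$, then $T$ is a bijection of $N_L$. Any $x \in N_L \setminus \{0_\omega\}$ then satisfies $T^{-m}x \in N_L$ for all $m \geq 0$, hence $T^nx \in E$ for every $n \in \ZZ$, and the statement holds even with $L=0$. The same applies to the symmetric chain $B_L := T^{L}(M_-)$, whose successive quotients stabilize at some dimension $d_-$.

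It remains to treat the case where both chains grow strictly forever, so that $d_+, d_- \geq 1$. Here we must find $L$ with $N_L \cap B_L \neq \{0_\omega\}$, since
\[
\bigcap_{|n|\geq L}T^{-n}(E) = N_L \cap B_L.
\]
To do this I would apply Proposition~\ref{Pro:F_n+E_n} twice. First, to the closed subspace $N_{L_0}$ for a large $L_0$ and the non-increasing sequence $N_{L_0} \cap \bigcap_{L_0 \leq n<L_0+k} T^{n}(E)$; second, symmetrically, with the roles of $T$ and $T^{-1}$ exchanged. This produces decompositions into finite-dimensional blocks $F_k$ with $\val(F_k) \to \infty$.

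The key step would then repeat the ``lifting'' argument leading to \eqref{eq:x=T(sum)+e}. Once the block dimensions have stabilized, every vector of a block can be written as $T^{\pm1}$ of a convergent series of later blocks plus a vector of the limit intersection. Assuming by contradiction that $N_L \cap B_L = \{0_\omega\}$ for every $L$, the limit intersections vanish. One then obtains inclusions of the form $E' \subset T(E'')$ for the tails of both chains. Comparing the finite-dimensional compressions, as with $T_N = P_N \circ T$ in the proof of Theorem~\ref{The:intersect-n>=1}, should produce two distinct vectors with the same image under $T$, or under $T^{-1}$. That contradicts invertibility.

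I expect this last step to be the main obstacle. The forward chain $(N_L)$ and the backward chain $(B_L)$ interact only through $E$, and the dimension count must be arranged so that the constants $d_+$ and $d_-$ cannot both be positive while $N_L \cap B_L = \{0_\omega\}$ for every $L$. My first attempt would be to run the decomposition of Proposition~\ref{Pro:F_n+E_n} inside $N_{L_0}$ with respect to the subspaces $N_{L_0} \cap T^{L}(M_-)$. I would then try to show that the finite codimension of $M_+$ in each $N_L$ forces these intersections to be of finite codimension in $N_{L_0}$ for large $L$, and hence non-zero.
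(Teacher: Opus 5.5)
Your set-up is sound. With $N_L=T^{-L}(M_+)$ and $B_L=T^{L}(M_-)$ you have $\bigcap_{|n|\ge L}T^{-n}(E)=N_L\cap B_L$, and the map $N_{L+1}/N_L\to N_L/N_{L-1}$ induced by $T$ is injective. The stationary cases correctly give $L=0$; they even cover the case where $M_+$ has finite codimension.

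The gap is the case where both chains grow strictly forever. This is the whole content of the theorem, and there the proposal offers only a programme, which you yourself flag as the main obstacle. The case really occurs. Take the bilateral shift from the Remark following the theorem, with $E=\ker(e_1^*)$. Then $N_L=\bigcap_{n\ge L}\ker(e_{2n}^*)$ and $B_L=\bigcap_{n\ge L}\ker(e_{2n+1}^*)$ for $L\ge1$, so $d_+=d_-=1$.

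The same example refutes the step you propose at the end. $N_{L_0}\cap B_L$ is spanned by the $e_j$ with $j<2L_0$ even or $j<2L$ odd, so it is finite-dimensional for every $L$. But $N_{L_0}$ contains every $e_{2n+1}$, so these intersections are never of finite codimension in $N_{L_0}$.

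Nor does the lifting argument behind \eqref{eq:x=T(sum)+e} transfer to the chain $N_{L_0}\cap\bigcap_{L_0\le n<L_0+k}T^{n}(E)$. That argument relied on the relation $E_{n+1}=E\cap T^{-1}(E_n)$, i.e.\ on the chain being compatible with $T$. The backward conditions shift under the inverse operator, and it maps $N_{L_0}$ onto the strictly larger space $N_{L_0+1}$. So the hypothesis $N_L\cap B_L=\{0_\omega\}$ for all $L$ gives you no inclusion $E'\subset T(E'')$ from which a compression argument could start.

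The missing idea is to \emph{construct} the vector directly. The paper uses only the forward chain $E_n=\bigcap_{k<n}T^{-k}(E)$, its blocks $F_n$ from Proposition~\ref{Pro:F_n+E_n}, and $M:=M_+$. The steps are as follows.

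First, assume $M$ has infinite codimension. Then the stabilization $\dim F_n=\dim F_N$ for $n\ge N$ gives the lifting inclusion $E_n\subset T(E_{n+1})+M$ for all $n\ge N$, without any contradiction hypothesis.

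Second, one may assume $\bigcap_{n\ge0}T^n(M)=\{0_\omega\}$, since this space equals $\bigcap_{n\in\ZZ}T^{-n}(E)$. Each $T^n(M)=M\cap T^n(E_n)$ has finite codimension in $M$, so Proposition~\ref{Pro:F_n+E_n} applied inside $M$ gives $\val(T^n(M))\to\infty$. Hence every series $\sum_nT^nm_n$ with $m_n\in M$ converges.

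Third, take the projection $P_N$ onto $\bigoplus_{k\le N}F_k$ along $E_N$. A dimension count for $P_N\circ T^L$ on $\bigoplus_{k=N+1}^{N+L}F_k$, together with the iterated lifting, produces $x\in E_N\setminus M$ with $T^Lx\in M$.

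Fourth, the lifting inclusion lets one subtract successive corrections $T^nm_n$ with $m_n\in M$, so that $T^{-l}\bigl(x-\sum_{n\le k}T^nm_n\bigr)\in E_N$ for $0\le l\le k+1$. Because $T(M)\subset M$, the corrections do not spoil $T^Lx\in M$. The limit $x^*$ is non-zero since $x\notin M$ while $\sum_nT^nm_n\in M$. Hence $0_\omega\neq x^*\in\bigcap_{|n|\ge L}T^{-n}(E)$.

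What your proposal lacks is this backward correction by elements of $T^n(M_+)$, with convergence supplied by $\bigcap_nT^n(M_+)=\{0_\omega\}$.
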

\begin{proof}
	Let $T$ be an invertible operator on $\omega$ and let $E$ be a closed subspace of finite codimension in $\omega$. We let $E_0 := \omega$ and
	\[
	E_n := E \cap T^{-1}(E) \cap \cdots \cap T^{-(n-1)}(E) \quad \text{ for all } n \geq 1,
	\]
	so that $(E_n)_{n \geq 1}$ is indeed a non-increasing sequence of closed subspaces of finite codimension. Using Proposition~\ref{Pro:F_n+E_n} we can find a sequence of finite-dimensional subspaces $(F_n)_{n \geq 1}$ such that:
	\begin{enumerate}[label=$\bullet$]
		\item $\omega=F_1\oplus E_1$ and $E_n=F_{n+1}\oplus E_{n+1}$ for every $n \geq 1$;
		
		\item the sequence $(\val(F_n))_{n \geq 1}$ tends to infinity and hence every series $\sum_{n=1}^{\infty} f_n$ is convergent whenever $(f_n)_{n \geq 1}$ is a sequence with $f_n \in F_n$ for each $n \geq 1$;
		
		\item for every $x \in \omega$ there exist a unique sequence $(f_n)_{n \geq 1}$, with $f_n\in F_n$ for each $n \geq 1$, and a unique vector $e \in \bigcap_{n \geq 1} E_n$ such that $x = \sum_{n=1}^{\infty} f_n + e$.\\
	\end{enumerate}
	
	Now, consider the subspace
	\[
	M := \bigcap_{n \geq 1} E_n = \bigcap_{n \geq 0}T^{-n}(E).
	\]
	We first remark that, if $M$ is a subspace of finite codimension in $\omega$, then by applying Theorem~\ref{The:intersect-n>=1} and Remark~\ref{Rem:injectivity} to the inverse (and hence injective) operator $T^{-1}$ and to the subspace $M$ we would have that
	\[
	\bigcap_{n\in \ZZ}T^{-n}(E) =\bigcap_{n \geq 0}T^n(M) \neq \{0_{\omega}\}.
	\]
	Therefore we would get the desired result for $L=0$.\\
	
	Thus, let us assume that $M$ is not of finite codimension in $\omega$. In particular, as in the proof of Theorem~\ref{The:intersect-n>=1}, this means that $E_{n+1}$ is a strict subspace of $E_n$ and hence that $F_{n+1}$ is not reduced to $\{0_{\omega}\}$ for all $n \geq 0$. Moreover, since $T$ is invertible and hence injective, again with the same arguments as in Theorem~\ref{The:intersect-n>=1} we can deduce \eqref{eq:dimF_N} and \eqref{eq:x=T(sum)+e}, that is: there exists some $N \geq 1$ such that
	\[
	\dim(F_n) = \dim(F_N) \quad \text{ for all } n \geq N,
	\]
	and given any $n \geq N$ and any $x \in E_n$ there exist a sequence $(f_k)_{k \geq n+2}$, with $f_k \in F_k$ for each $k \geq n+2$, and a vector $m \in M$ such that
	\[
	x = T\left( \sum_{k=n+2}^{\infty}f_k\right ) + m.
	\]
	In particular, from \eqref{eq:x=T(sum)+e} we have that for every $n \geq N$ and every $x\in E_n$ there exist vectors $y \in E_{n+1} \subset E_N$ and $m \in M$ such that
	\begin{equation}\label{eq:Ty+m}
		x = Ty + m.
	\end{equation}
	In addition, using the previous fact multiple times together with the inclusion $T(M) \subset M$ we get that, for every $n \geq N$, every $x\in E_n$ and every $k \geq 1$ there exist vectors $y \in E_{n+k} \subset E_N$ and $m \in M$ such that
	\begin{equation}\label{eq:T^ky+m}
		x=T^ky+m.
	\end{equation}
	These facts \eqref{eq:Ty+m} and \eqref{eq:T^ky+m} will be important to conclude the proof.\\
	
	For each $n \geq 1$ we now let $M_n := M \cap T(M) \cap \cdots \cap T^{n}(M)$, which fulfills that $M_n = T^n(M)$ because $T(M) \subset M$. In particular, we notice that $T(M_n)=M_{n+1}$ and that $M_n$ is of finite codimension in $M$ for every $n \geq 1$ since
	\[
	M_n = T^n(M) = T^n \left(\bigcap_{k \geq 0} T^{-k}(E) \right) = T^n(E_n) \cap M = (T^{-n})^{-1}(E_n) \cap M,
	\]
	and every preimage of a finite codimensional subspace by an operator is again a finite codimensional subspace. We can therefore use Proposition~\ref{Pro:F_n+E_n} for $M$ with the non-increasing sequence of subspaces $(M_n)_{n \geq 1}$ in order to get a family $(G_{n})_{n \geq 1}$ of finite-dimensional subspaces such that:
	\begin{enumerate}[label=$\bullet$]
		\item $M = G_1 \oplus M_1$ and $M_n = G_{n+1} \oplus M_{n+1}$ for every $n \geq 1$;
		
		\item the sequence $(\val(G_n))_{n \geq 1}$ tends to infinity and hence every series $\sum_{n=1}^{\infty} g_n$ is convergent whenever $(g_n)_{n \geq 1}$ is a sequence with $g_n \in G_n$ for each $n \geq 1$;
		
		\item for every $x \in M$ there exist a unique sequence $(g_n)_{n \geq 1}$, with $g_n \in G_n$ for each $n \geq 1$, and a unique vector $z \in \bigcap_{n \geq 1} M_n$ such that $x = \sum_{n=1}^{\infty} g_n + z$.\\
	\end{enumerate}
	
	As in the case in which $M$ was finite codimensional in $\omega$, if we had that
	\[
	\bigcap_{n \geq 0} T^n(M) \neq \{0_{\omega}\}
	\]
	then we would conclude that
	\[
	\bigcap_{n\in \ZZ} T^{-n}(E) = \bigcap_{n \geq 0} T^n(M) \neq \{0_{\omega}\},
	\]
	getting the desired result for $L=0$. Therefore, assume that $\bigcap_{n \geq 0} T^n(M) = \{0_{\omega}\}$ or, equivalently, that $\bigcap_{n \geq 1} M_n = \{0_{\omega}\}$. A consequence of this assumption is that:
	\begin{enumerate}[label=$\bullet$]
		\item the series $\sum_{n=1}^{\infty} m_n$ is convergent for every sequence $(m_n)_{n \geq 1}$ with $m_n\in M_n$ for each $n \geq 1$.
	\end{enumerate}
	Indeed, since $\bigcap_{n \geq 1} M_n = \{0_{\omega}\}$ we have that each element $m_n \in M_n$ can be written by a series $\sum_{k=n+1}^{\infty} g_k$ with $g_k \in G_k$ for each $k \geq n+1$, and since the sequence of valuations $(\val(G_k))_{k \geq 1}$ tends to infinity we deduce that the sequence of valuations $(\val(M_k))_{k \geq 1}$ also tends to infinity, implying the stated consequence.\\
	
	We now claim that there exist some vector $x \in E_N \setminus M$ and some positive integer $L \geq 1$ fulfilling that $T^L x\in M$. Actually, consider $L \geq 1$ such that
	\[
	\dim(\oplus_{k=1}^N F_k) < \dim(\oplus_{k=N+1}^{N+L} F_k),
	\]
	and let $T_{N,L} : \oplus_{k=N+1}^{N+L} F_k \longrightarrow \oplus_{k=1}^N F_k$ be the operator $T_{N,L} := P_N \circ T^L$, where $P_N$ is the projection of $\omega = (\oplus_{k=1}^N F_k) \oplus E_N$ onto $\oplus_{k=1}^N F_k$. The inequality between the dimensions shows that there exists some vector $z \neq 0_{\omega}$ in the kernel of the finite-dimensional operator $T_{N,L}$, so that
	\[
	z \in \oplus_{k=N+1}^{N+L} F_k \subset E_N \setminus E_{N+L} \quad \text{ and } \quad T^Lz \in E_N.
	\]
	Using~\eqref{eq:T^ky+m} we deduce that there exist vectors $y \in E_{N+L} \subset E_N$ and $m \in M$ with
	\[
	T^Lz=T^Ly+m \quad \text{ and hence } \quad T^{L}(z-y) \in M.
	\]
	Thus, the vector $x = z-y$ belongs to $E_N\setminus M$ while $T^Lx \in M$.\\
	
	To conclude the proof we finally claim that for such vector $x \in E_N \setminus M$ with $T^Lx \in M$, there exists a sequence $(m_n)_{n \geq 0} \subset M$ such that
	\[
	0_{\omega} \neq x-\sum_{n=0}^{\infty}T^{n}m_n \in \bigcap_{|n| \geq L}T^{-n}(E).
	\]
	This sequence $(m_n)_{n \geq 0}$ is obtained recursively. First, using~\eqref{eq:Ty+m} there exist vectors $y_0 \in E_N$ and $m_0 \in M$ such that $x = Ty_0 + m_0$. We then let $x_0 := x-m_0$, so that
	\[
	x_0 \in E_N \subset E \quad \text{ and } \quad T^{-1} x_0 = y_0 \in E_N \subset E.
	\]
	Assume that for some $k \geq 0$ we have already fixed $m_0,\dots,m_k \in M$ such that
	\[
	x_k = x-\sum_{n=0}^k T^{n}m_n
	\]
	satisfies $T^{-l}x_k \in E_N \subset E$ for every $0 \leq l \leq k+1$. Thus, since $T^{-(k+1)}x_k\in E_N$, it follows from~\eqref{eq:Ty+m} that there exist vectors $y_{k+1} \in E_N$ and $m_{k+1} \in M$ such that $T^{-(k+1)}x_k=T y_{k+1}+m_{k+1}$. Letting $x_{k+1} := x_{k}-T^{k+1}m_{k+1} = x-\sum_{n=0}^{k+1}T^{n}m_n$ we have that
	\[
	T^{-l}x_{k+1}=T^{-l}x_k -T^{k+1-l}m_{k+1}\in E_N \subset E \quad \text{  for every } 0 \leq l \leq k+1,
	\]
	but also that
	\[
	T^{-(k+2)} x_{k+1} = T^{-1}(T^{-(k+1)}x_k-m_{k+1}) = y_{k+1} \in E_N \subset E.
	\]
	Recursively we obtain $(x_k)_{k \geq 0}$ and $(m_k)_{k \geq 0}$ fulfilling that $x_k \in \bigcap_{l=0}^{k+1} T^l(E)$ for every $k \geq 0$. In particular, the vector
	\[
	x^* := \lim_{k \to \infty} x_k = x-\sum_{n=0}^{\infty}T^n m_n,
	\]
	which is well-defined since $T^nm_n \in M_n$ for each $n \geq 1$ and the series $\sum_{n=0}^{\infty}T^n m_n$ is thus convergent, fulfills that $x^* \in \bigcap_{l \geq 0}T^{l}(E)$. Moreover, notice that
	\[
	T^Lx^* = T^Lx - \sum_{n=0}^{\infty} T^{L+n}m_n \in M = \bigcap_{l \geq 0} T^{-l}(E),
	\]
	and hence $x^* \in \bigcap_{l \geq L}T^{-l}(E)$. Since $x \notin M$ and $\sum_{n=0}^{\infty}T^{n}m_n \in M$, we finally have that $0_{\omega} \neq x^* \in \bigcap_{|n| \geq L}T^{-n}(E)$, concluding the proof.
\end{proof}

\begin{remark}
	Given an invertible operator $T$ on $\omega$ and a closed subspace of finite codimension $E \subset \omega$, it is possible that for every $L \geq 0$ the space $\bigcap_{|n| \geq L}T^{-n}(E)$ is finite-dimensional. For instance, if we denote by $(e_n)_{n \geq 1}$ the canonical basis of $\omega$ and by $(e_n^*)_{n \geq 1}$ the corresponding coordinate functionals, then we can consider the operator $T$ given by $Te_{2n} := e_{2(n-1)}$ for each $n \geq 2$, $Te_2 := e_1$, and $Te_{2n+1} := e_{2n+3}$ for each $n \geq 0$. Thus, for $E := \ker(e_1^*) = \ker(p_1)$, given any $l \geq 1$ we have that
	\[
	T^l(E)=\ker(e^*_{2l+1}) \quad \text{ and } \quad T^{-l}(E)=\ker(e^*_{2l})
	\]
	so that $\bigcap_{|n| \geq L} T^{-n}(E)$ is finite-dimensional for every $L \geq 1$. This example also shows that we can have the equality $\bigcap_{n\in \ZZ}T^{-n} (E) = \{0_{\omega}\}$. Note that, if we reorder the indices of the basis $(e_n)_{n \geq 1}$ using the set $\ZZ$ appropriately, then the operator $T$ coincides with the classical bilateral shift on $\KK^{\ZZ}$.
\end{remark}

We end up this section by showing how we can deduce from Theorem~\ref{The:intersect-|n|>=L} the absence of expansive operators on $\omega$.

\begin{theorem}
	There is no (metrically) expansive operator on $\omega$.
\end{theorem}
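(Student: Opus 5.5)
Expansivity is defined here for homeomorphisms, so an expansive operator $T$ on $\omega$ must be invertible. By the discussion after \eqref{eq:metric}, expansivity does not depend on the translation-invariant metric inducing the topology. It therefore suffices to work with $d$ from \eqref{eq:metric} and seminorms \eqref{eq:omega.seminorms}. Our goal is: for every $k\in\NN$, find $x\neq 0_\omega$ with $d(T^nx,0_\omega)<2^{-k+1}$ for all $n\in\ZZ$. Once this is done, the same final argument as in the proof of Theorem~\ref{The:no.pos-exp.omega}, comparing balls for $d$ and $d'$, shows that $T$ is not expansive for any compatible metric $d'$.

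Fix $k$ and set $E:=\ker(p_k)$, a closed subspace of finite codimension. Theorem~\ref{The:intersect-|n|>=L} gives $L\geq 0$ such that
\[
Z:=\bigcap_{|n|\geq L}T^{-n}(E)\neq\{0_\omega\}.
\]
For $x\in Z$ we have $p_k(T^nx)=0$ whenever $|n|\geq L$. Only the finitely many iterates $T^nx$ with $|n|<L$ remain to be controlled. We pick any $x_0\in Z\setminus\{0_\omega\}$ and replace it by $x:=\lambda x_0$ with $\lambda>0$ small. Scaling keeps $x\in Z$, since $Z$ is a subspace, and it keeps $x\neq 0_\omega$.

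The maps $T^n$ for $|n|<L$ are continuous, and each $p_k(T^n x_0)$ is a finite number. We can therefore choose $\lambda$ with $\lambda\, p_k(T^nx_0)<2^{-k}$ for all $|n|<L$. Then $p_k(T^nx)<2^{-k}$ for every $n\in\ZZ$. Since the seminorms are increasing, $p_l\leq p_k$ for $l\leq k$. The estimate from the proof of Theorem~\ref{The:no.pos-exp.omega} applies verbatim:
\[
d(T^nx,0_\omega)\leq 2^{-k}\sum_{l=1}^k 2^{-l}+\sum_{l>k}2^{-l}<2^{-k+1}\quad\text{for all } n\in\ZZ.
\]

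The real work is already contained in Theorem~\ref{The:intersect-|n|>=L}. The one point requiring care is that the bound must be uniform over all $n\in\ZZ$. This holds because the exceptional range $|n|<L$ is finite, so a single scalar $\lambda$ handles it. Letting $k\to\infty$, no $c>0$ can serve as an expansivity constant, so $\omega$ admits no expansive operator.
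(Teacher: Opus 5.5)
Your proposal is correct and follows essentially the same route as the paper's proof. Like the paper, you apply Theorem~\ref{The:intersect-|n|>=L} to $\ker(p_k)$, rescale so the finitely many iterates with $|n|<L$ satisfy $p_k(T^nx)<2^{-k}$, bound $d(T^nx,0_\omega)<2^{-k+1}$ for all $n\in\ZZ$, and conclude with the ball-comparison argument from Theorem~\ref{The:no.pos-exp.omega}.
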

\begin{proof}
	Let $T$ be an invertible operator on $\omega$. Given $k \in \NN$, let
	\[
	E_k := \ker(p_k) = \{ x = ([x]_j)_{j \geq 1} \in \omega : [x]_j = 0 \text{ for } 1 \leq j \leq k \}.
	\]
	It follows from Theorem~\ref{The:intersect-|n|>=L} applied to the finite-codimensional subspace $E_k$ that there exist some integer $L \geq 0$ and some vector $x \in \omega \setminus \{0_{\omega}\}$ such that $p_k(T^nx) = 0$ for all $n \in \ZZ$ with $|n| \geq L$. By considering an enough small multiple of $x$ we can assume that $p_k(T^nx) < \frac{1}{2^{k}}$ for all $n \in \ZZ$ with $|n|<L$. Since $(p_l)_{l \geq 1}$ given by \eqref{eq:omega.seminorms} is an increasing sequence of seminorms, we have for the metric $d$ given by \eqref{eq:metric} and for every integer $n \in \ZZ$ that
	\[
	d(T^nx,0_{\omega}) = \sum_{l=1}^{\infty} \frac{\min\{ 1 , p_l(T^nx) \}}{2^{l}} \leq \frac{1}{2^{k}} \sum_{l=1}^{k} \frac{1}{2^l} + \sum_{l=k+1}^{\infty} \frac{1}{2^l} < \frac{1}{2^{k-1}}.
	\]
	Thus, for each $k \in \NN$ there exists some $x \in \omega \setminus \{0_{\omega}\}$ such that $d(T^nx,0_{\omega}) < 2^{-k+1}$ for every $n \in \ZZ$. Arguing exactly as in the proof of Theorem~\ref{The:no.pos-exp.omega} we can deduce that $T$ is not an expansive operator with respect to $d$, and that $\omega$ does not admit expansive operators in the metric sense.
\end{proof}

\section{Expansive operators on Fr\'echet spaces without continuous norm}

The objective of this section is proving that there are Fr\'echet spaces without continuous norm that admit (positively) expansive operators. We first consider spaces of the type $X\oplus\omega$, where $X$ is an infinite-dimensional Banach space, and we identify sufficient conditions on $X$ for the existence of this kind of operators on the direct sum space $X\oplus\omega$.

\begin{theorem}\label{The:X+omega.pos-exp}
	If a Banach space $X$ admits a basic sequence $(u_k)_{k \geq 1}$ such that:
	\begin{enumerate}[label=$\bullet$]
		\item $\inf_{k \geq 1} \|u_k\|>0$;
		
		\item the closed subspace $M : =\overline{\lspan}\{ u_k : k \geq 1 \}$ is topologically complemented in $X$;
		
		\item and the forward shift defined along $(u_k)_{k \geq 1}$ is continuous on $M$;
	\end{enumerate}
	then the Fr\'echet space $X\oplus\omega$ admits a (metrically) positively expansive operator.
\end{theorem}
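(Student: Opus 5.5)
The plan is to build the operator explicitly, in the spirit of $2I_X$. The $\omega$-component feeds its coordinates, one at a time, into the subspace $M$, where they are pushed along the basic sequence and amplified by a factor $2$ at each step. Let $P : X \longrightarrow M$ be a continuous projection onto $M$ (second hypothesis). Let $S : M \longrightarrow M$ be the continuous forward shift $Su_k = u_{k+1}$ (third hypothesis), and let $B$ be the backward shift on $\omega$, $B([x]_j)_{j\ge1} = ([x]_{j+1})_{j\ge1}$. Define
\[
T(z,x) := \bigl( 2(I-P)z + 2SPz + [x]_1 u_1 ,\ Bx \bigr) \quad \text{ for } (z,x) \in X\oplus\omega .
\]
This map is clearly linear and continuous. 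For the metric I take the seminorms $q_k(z,x) := \|z\| + p_k(x)$ with $p_k$ as in \eqref{eq:omega.seminorms}. Then \eqref{eq:metric} gives $d((z,x),0) \geq \frac12 \min\{1,\|z\|\}$. So it suffices to show that for every non-zero $(z,x)$ the $X$-component of $T^n(z,x)$ has norm tending to infinity; then $c=\tfrac12$ is a positive expansivity constant.

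The first step is to compute the iterates. Writing $T^n(z,x) = (z_n, B^n x)$, an induction gives
\[
z_n = 2^n (I-P)z + 2^n S^n P z + \sum_{i=0}^{n-1} 2^{\,n-1-i}\, [x]_{i+1}\, u_{n-i} .
\]
This uses that $S$ and $I-P$ do not interact: $S$ maps $M$ into $M$, and the injected vectors $u_1$ lie in $M$.

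The second step is to obtain lower bounds from the coefficient functionals. Let $K$ be the basis constant of $(u_k)$ and $\delta := \inf_k \|u_k\| > 0$ (first hypothesis). The $k$-th coordinate functional $u_k^*$ on $M$ satisfies $\|u_k^*\| \leq 2K/\delta$, and hence $|u_k^*(Py)| \le (2K/\delta)\|P\|\,\|y\|$ for $y \in X$. Also $\|(I-P)y\| \le \|I-P\|\,\|y\|$. There are three cases.
\begin{enumerate}[label=(\roman*)]
\item If $(I-P)z \neq 0$, then $(I-P)z_n = 2^n (I-P)z$, so $\|z_n\| \geq 2^n\|(I-P)z\| / \|I-P\| \to \infty$.
\item If $(I-P)z=0$ but $x \neq 0$, let $j := \val(x)$. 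For $n \ge j$ the coefficient of $u_{n-j+1}$ in $Pz_n$ equals $2^{n-j}[x]_j$. Indeed, $S^nPz$ only involves indices $> n$, and the other injected terms occupy indices different from $n-j+1$ (the terms with $i<j-1$ vanish). Hence $\|z_n\| \gtrsim 2^{n-j}|[x]_j| \to \infty$.
\item If $x=0$ and $Pz = \sum a_k u_k \neq 0$, pick $k$ with $a_k\neq0$. The coefficient of $u_{k+n}$ in $z_n$ is $2^n a_k$, which again forces $\|z_n\|\to\infty$.
\end{enumerate}

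The only delicate point is the bookkeeping in case (ii). One must check that the coordinate functional $u_{n-j+1}^*$ really isolates $2^{n-j}[x]_j$. This holds because the contribution of $S^nPz$ is supported on indices larger than $n$, and the shifted injections use pairwise distinct indices. Once this is verified, every non-zero orbit leaves the set $\{d(\cdot,0)<\tfrac12\}$. Independence of the chosen metric follows as in the argument at the end of the proof of Theorem~\ref{The:no.pos-exp.omega}. So $T$ is positively expansive on $X\oplus\omega$.
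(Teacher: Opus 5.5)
Your proof is correct and follows the paper's argument: the operator is literally the same, and the differences are only in bookkeeping. You write an explicit formula for the iterates and treat the case $x\neq 0$ directly through the functional $u^*_{n-j+1}$, whereas the paper first iterates $k_0$ times to land in the case $P_Mz\neq 0$. One small caveat is that the metric-independence step is not literally the argument at the end of the proof of Theorem~\ref{The:no.pos-exp.omega}, which transfers \emph{non}-expansivity; you need its reverse, an inclusion $B_{d'}(0,c')\subset B_d(0,\tfrac12)$ combined with translation invariance of $d'$, as explained in the introduction.
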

\begin{proof}
	Let $Y$ be a subspace of $X$ such that $X = Y \oplus M$. We consider the projections $P_Y : X\oplus\omega \longrightarrow Y$, $P_M : X\oplus\omega \longrightarrow M$ and  $P_{\omega} : X\oplus\omega \longrightarrow \omega$ such that for any vector $z \in X\oplus\omega$ we can write $z = P_Yz + P_Mz + P_{\omega}z$. We now define an operator $T$ on $X\oplus\omega$ by letting
	\[
	Tz := 2P_Yz + 2F_u(P_Mz) + [P_{\omega}z]_1 u_1 + B_e(P_{\omega}z) \quad \text{ for each } z \in X\oplus\omega,
	\] 
	where $F_u : M \longrightarrow M$ is the forward shift along $(u_k)_{k \geq 1}$ and $B_e : \omega \longrightarrow \omega$ is the backward shift on $\omega$ along its canonical basis $(e_k)_{k \geq 1}$. Then, the linear map $T$ is continuous since the projections $P_Y$, $P_M$ and $P_{\omega}$ are continuous, since the forward shift $F_u$ is continuous on $M$ by assumption, and since the backward shift $B_e$ is continuous on $\omega$. It remains to show that $T$ is positively expansive. To check this fact notice that, since $P_Y$ and $P_M$ are continuous, there exist a continuous seminorm $q$ on $X\oplus\omega$ and a constant $C>0$ such that
	\[
	\|P_Y z\|_X \leq C q(z) \quad \text{ and } \quad \|P_M z\|_X \leq C q(z) \quad \text{ for every } z \in X\oplus\omega.
	\]
	Thus, given any non-zero vector $z \in X\oplus\omega$ we have several  non-mutually exclusive possibilities:
	\begin{enumerate}[label=$\bullet$]
		\item If $P_Yz \neq 0_Y$, then we get that
		\[
		\|P_Y(T^nz)\|_X = 2^n \|P_Yz\|_X \quad \text{ for every } n \geq 1,
		\]
		which implies that $q(T^nz) \geq \frac{1}{C}\|P_Y(T^nz)\|_X \geq 2^n \frac{\|P_Yz\|_X}{C}$. We can then conclude that $q(T^nz) \to \infty$ when $n \to \infty$
		
		\item If $P_Mz \neq 0_M$, then we get $P_Mz = \sum_{k=1}^{\infty} z_k u_k$ with some $z_{k_0} \neq 0$. Since $(u_k)_{k \geq 1}$ is a basic sequence, each coordinate functional $u_k^* : M \longrightarrow \KK$ satisfies that
		\[
		\|u_k^*\| \leq \frac{2K}{\|u_k\|_X},
		\]
		where $K$ is the basis constant of $(u_k)_k$; see for instance \cite[Vol.~I, Section~1.b.]{LindestraussTza1977_book_classical}. We can then deduce that 
		\begin{align*}
			2^n |z_{k_0}| = \left|u_{k_0+n}^*(P_M(T^n z))\right| &\leq \|u_{k_0+n}^*\| \cdot \|P_M(T^n z)\|_X \\[5pt]
			& \leq \frac{2KC}{\inf_{k \geq 1} \|u_k\|_X} \cdot q(T^n z),
		\end{align*}
		which implies that $q(T^nz) \to \infty$ when $n \to \infty$.
		
		\item If $P_{\omega}z \neq 0_{\omega}$, then we get $P_{\omega}z = \sum_{k=1}^{\infty} w_ke_k$ with some $w_{k_0} \neq 0$. In this case it suffices to remark that $T^{k_0}z$ will satisfy $P_M(T^{k_0}z) \neq 0_M$, and by the previous case we deduce that $q(T^nz) \to \infty$ when $n \to \infty$.
	\end{enumerate}
	In conclusion, we get that $q(T^nz) \to \infty$ for every $z \in X\oplus\omega \setminus \{0_{X \oplus \omega}\}$ so that for any translation-invariant metric $d$ inducing the topology of $X\oplus\omega$ there must exists some $c>0$ such that $d(T^nz,0_{X \oplus \omega}) \geq c$ for all sufficiently big $n \in \NN$.
\end{proof}

If we additionally assume in Theorem~\ref{The:X+omega.pos-exp} that the backward shift along $(u_k)_{k \geq 1}$ is continuous, then we even get an expansive operator.

\begin{theorem}\label{The:X+omega.exp}
	If a Banach space $X$ admits a basic sequence $(u_k)_{k \geq 1}$ such that:
	\begin{enumerate}[label=$\bullet$]
		\item $\inf_{k \geq 1}\|u_k\|>0$;
		
		\item the closed subspace $M := \overline{\lspan}\{ u_k : k \geq 1 \}$ is topologically complemented in $X$;
		
		\item the backward and forward shifts along $(u_k)_{k \geq 1}$ are continuous on $M$;
	\end{enumerate}
	then $X\oplus\omega$ admits an invertible positively expansive (and hence expansive) operator.
\end{theorem}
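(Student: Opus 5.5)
The plan is to reuse the operator $T$ from the proof of Theorem~\ref{The:X+omega.pos-exp} without change:
\[
Tz := 2P_Yz + 2F_u(P_Mz) + [P_{\omega}z]_1 u_1 + B_e(P_{\omega}z).
\]
That proof already shows $T$ is positively expansive. An invertible positively expansive operator is expansive, since the orbit over $n \in \NN_0$ is part of the orbit over $n \in \ZZ$. So the whole task reduces to showing that $T$ is a bijection whose inverse is continuous. The new hypothesis, continuity of the backward shift $B_u$ along $(u_k)_{k \geq 1}$ on $M$, will be used only to write down that inverse.

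The key structural point is that, on the component $M \oplus \omega$, the map $T$ acts as a weighted bilateral shift. To see this, fix the two-sided indexing $v_k := u_k$ for $k \geq 1$ and $v_{-j} := e_{j+1}$ for $j \geq 0$. Then $T$ sends $v_{-j} \mapsto v_{-j+1}$ for $j \geq 1$, sends $v_0 = e_1 \mapsto u_1 = v_1$, and sends $v_k \mapsto 2v_{k+1}$ for $k \geq 1$. On $Y$ the map $T$ is simply $2I_Y$.

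This suggests the candidate inverse
\[
Sz := \tfrac12 P_Yz + \tfrac12 B_u\bigl(P_Mz - u_1^*(P_Mz)\,u_1\bigr) + u_1^*(P_Mz)\, e_1 + F_e(P_{\omega}z),
\]
where $u_1^*$ is the first coordinate functional on $M$ (continuous because $(u_k)$ is basic), $B_u$ is the backward shift on $M$, and $F_e$ is the forward shift on $\omega$. Note that $B_u$ already kills $u_1$, so the subtraction inside is harmless; it is kept only for clarity. Continuity of $S$ follows from continuity of $P_Y,P_M,P_\omega$, $u_1^*$, $B_u$ (by hypothesis) and $F_e$ on $\omega$.

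I will then verify $ST = I$ and $TS = I$ separately on each of the pieces $Y$, $M$ and $\omega$, checking on basis vectors and extending by linearity and continuity:
\begin{itemize}
\item On $Y$, both compositions are $2 \cdot \tfrac12 = 1$.
\item For $e_1$: $T e_1 = u_1$, and $S u_1 = e_1$.
\item For $e_{j+1}$ with $j \geq 1$: $T e_{j+1} = e_j$, and $S e_j = F_e e_j = e_{j+1}$.
\item For $u_k$: $T u_k = 2u_{k+1}$, and $S(2u_{k+1}) = B_u u_{k+1} = u_k$.
\item Conversely, $TSu_1 = Te_1 = u_1$, and $TSu_k = T(\tfrac12 u_{k-1}) = u_k$ for $k \geq 2$.
\end{itemize}
On $M$ the operators are continuous and agree with the identity on the dense span of $(u_k)$, so the identities hold on all of $M$. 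On $\omega$ they hold coordinatewise, because $B_e$ and $F_e$ are continuous and finite sequences are dense.

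The main, if modest, obstacle is bookkeeping. One must make sure the cross terms are handled correctly: the $e_1$-coordinate of the $\omega$-part is transferred into $M$ by $T$, and the $u_1$-coordinate of the $M$-part is transferred back into $\omega$ by $S$. One must also check that continuity of $B_u$ on $M$ is exactly what makes $S$ continuous; this is the only place the extra hypothesis enters. With $T$ invertible and positively expansive, $T$ is expansive, which finishes the argument.
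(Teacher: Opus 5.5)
Your proposal is correct and is essentially the paper's proof: you reuse the operator $T$ from Theorem~\ref{The:X+omega.pos-exp} and exhibit the same explicit inverse $Sz = \tfrac12 P_Yz + \tfrac12 B_u(P_Mz) + u_1^*(P_Mz)\,e_1 + F_e(P_\omega z)$, and its continuity is exactly where the hypothesis on $B_u$ enters. Your basis-vector check of $ST = TS = I$, extended by continuity, just fills in the verification that the paper leaves implicit.
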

\begin{proof}
	With the continuity of the backward shift assumption we have that the operator $T$ defined in Theorem~\ref{The:X+omega.pos-exp} is invertible. In fact, its inverse $T^{-1}$ is given by
	\[
	T^{-1}z = \frac{1}{2}P_Yz + \frac{1}{2}B_u(P_Mz) + [P_Mz]_1 e_1 + F_e(P_{\omega}z) \quad \text{ for each } z \in X\oplus\omega,
	\] 
	where $B_u : M \longrightarrow M$ is the backward shift along $(u_k)_{k \geq 1}$ and $F_e : \omega \longrightarrow \omega$ is the forward shift on $\omega$ along its canonical basis $(e_k)_{k \geq 1}$.
\end{proof}

We remark that Theorem~\ref{The:X+omega.exp} applies to many classical Banach spaces:

\begin{corollary}
	The Fr\'echet space $X\oplus\omega$ admits an invertible positively expansive operator and thus an expansive operator in any of the following cases: 
	\begin{enumerate}[label=$\bullet$]
		\item $X=\ell^p$ with $p \in [1,\infty[$;
		
		\item $X=c_0$;
		
		\item $X$ is an infinite-dimensional Hilbert space;
		
		\item $X$ is a non-reflexive Banach space admitting an unconditional basis.
	\end{enumerate}
\end{corollary}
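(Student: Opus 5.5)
The plan is to apply Theorem~\ref{The:X+omega.exp} in each case. We exhibit a basic sequence $(u_k)_{k \geq 1}$ with $\inf_k \|u_k\|>0$, whose closed span $M$ is complemented in $X$, and such that both the forward and backward shifts along $(u_k)$ are continuous on $M$. The natural choice in every case is a normalized sequence equivalent to the canonical basis of $\ell^p$ or $c_0$. On those spaces the forward shift is an isometry and the backward shift has norm one. Since the shifts on $M$ are conjugate to these via the isomorphism $M \cong \ell^p$ or $c_0$, they are continuous.

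For $X=\ell^p$ with $1\le p<\infty$ and for $X=c_0$, take $(u_k)$ to be the canonical basis itself. Then $M=X$ is trivially complemented, $\|u_k\|=1$, and both shifts are bounded. For an infinite-dimensional Hilbert space $X$, take any orthonormal sequence $(u_k)$. Its closed span $M$ is complemented by the orthogonal projection, and $M$ is isometric to $\ell^2$, so we are in the previous case.

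For a non-reflexive Banach space with an unconditional basis, we use James' theorem: a Banach space with an unconditional basis is reflexive if and only if it contains no copy of $c_0$ or $\ell^1$. Non-reflexivity then gives a subspace isomorphic to $c_0$ or $\ell^1$. We must still upgrade this to a \emph{complemented} copy spanned by a normalized basic sequence equivalent to the unit vector basis.

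\begin{itemize}
\item \textbf{Case $c_0$.} Via the Bessaga--Pełczyński selection principle, the unit vectors of the $c_0$-copy can be perturbed into a normalized block basic sequence of the unconditional basis, still equivalent to the $c_0$ basis. A block sequence equivalent to the $c_0$ unit vectors spans a complemented subspace when the basis is unconditional: one defines a projection using norming functionals supported on the blocks, which is bounded because the $c_0$-sum estimate controls $\sum_k x_k^*(x) u_k$. Alternatively, one can invoke Sobczyk's theorem in the separable setting.
\item \textbf{Case $\ell^1$.} The $\ell^1$ copy is not automatically complemented in general. However, in a space with an unconditional basis containing $\ell^1$, a block basic sequence equivalent to the $\ell^1$ basis spans a complemented subspace (Lindenstrauss--Tzafriri, Vol.~I, Section~1.c). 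This is because, in the unconditional setting, $\ell^1$ block sequences admit a bounded projection built from disjointly supported norming functionals.
\end{itemize}

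The main obstacle is this last case: verifying that non-reflexivity plus an unconditional basis yields a \emph{complemented} normalized copy of $c_0$ or $\ell^1$. I would first try to cite the Lindenstrauss--Tzafriri results on block bases of unconditional bases, namely Proposition~1.c.12/1.c.13 and Theorem~1.c.12. The remaining conditions of Theorem~\ref{The:X+omega.exp} then follow immediately from the equivalence with the unit vector basis.
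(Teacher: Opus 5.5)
Your proposal is correct and follows the paper's proof: the canonical bases for $\ell^p$ and $c_0$, an orthonormal sequence in the Hilbert case, and in the last case a complemented copy of $c_0$ or $\ell^1$ obtained from James's theorem. The paper simply cites James for the complemented copy, whereas you spell out the complementation (Sobczyk for $c_0$, block sequences for $\ell^1$). The one point to tighten is the $\ell^1$ projection: it should be built from the restrictions to the block supports of a single functional $x^*$ with $\inf_n|x^*(u_n)|>0$ (for instance one witnessing that the basis is not shrinking, which also supplies the $\ell^1$ block sequence), because arbitrary norming functionals supported on the blocks need not give a bounded map.
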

\begin{proof}
	It suffices to remark that these spaces admit a basic sequence satisfying the assumptions of Theorem~\ref{The:X+omega.exp}. If $X=\ell^p$ for $p \in [1,\infty[$ or $c_0$, we can let $(u_k)_{k \geq 1}$ be its canonical basis. If $X$ is an infinite-dimensional Hilbert space, it suffices to consider any orthonormal sequence $(u_k)_{k \geq 1}$ in $X$. Finally, if $X$ is a non-reflexive Banach space admitting an unconditional basis, using James~\cite{James1950_AnnalsMath_bases} we know that $X$ contains a complemented copy of $\ell^1$ or a complemented copy of $c_0$.
\end{proof}

We conclude the paper by exhibiting some Fréchet spaces without continuous norm and formed as $\omega$-products of infinite-dimensional Banach spaces that support expansive operators. We will consider the Fr\'echet spaces $\omega(\ell^p)$ and $\omega(c_0)$ where, for a Banach space $X$, the space $\omega(X)$ denotes the space $X^{\NN}$ endowed with the sequence of seminorms $(p_k)_{k \geq 1}$ defined for each $k \in \NN$ as
\[
p_k(x) := \max_{1 \leq j \leq k} \| x^{(j)} \|_X \quad \text{ for each } x = (x^{(j)})_{j \geq 1} \in \omega(X).
\]
For these Fr\'echet spaces we have the following result: 

\begin{prop}
	The spaces $\omega(\ell^p)$ with $p\in [1,\infty[$ and $\omega(c_0)$ admit an invertible positively expansive operator and thus an expansive operator.
\end{prop}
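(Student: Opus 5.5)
Let $X$ be $\ell^p$ ($1\le p<\infty$) or $c_0$, with canonical basis $(e_k)_{k\ge1}$. The plan is to imitate the construction of Theorem~\ref{The:X+omega.pos-exp}. There, the $\omega$-part was fed into a Banach coordinate through the first coordinate, and the weight $2$ then made it blow up. Here I would do the same simultaneously for every factor of the product, using the isomorphism $X\cong X\oplus X$ to make room for the incoming data.

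First I fix a splitting of the index set. Write $\NN=A\sqcup B$ with $A,B$ infinite, say $A$ the even and $B$ the odd integers. Let $X_A:=\overline{\lspan}\{e_k:k\in A\}$ and $X_B:=\overline{\lspan}\{e_k:k\in B\}$. Then $X=X_A\oplus X_B$, both projections have norm $1$, and each of $X_A,X_B$ is isometric to $X$ through the order-preserving relabelling.

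Let $J:X\to X_A$ and $R:X\to X_B$ be these surjective isometries. For $x=(x^{(j)})_{j\ge1}\in\omega(X)$ define
\[
(Tx)^{(j)} := 2\,J x^{(j)} + R\,x^{(j+1)} \qquad (j\geq 1).
\]
The $j$-th coordinate of $Tx$ depends only on $x^{(j)}$ and $x^{(j+1)}$, so $p_k(Tx)\le 3p_{k+1}(x)$ and $T$ is continuous.

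Next I check invertibility. Given $y$, the decomposition $y^{(j)}=P_Ay^{(j)}+P_By^{(j)}$ forces $x^{(j)}=\tfrac12 J^{-1}P_Ay^{(j)}$ and $x^{(j+1)}=R^{-1}P_By^{(j)}$. These two requirements must agree, so this $T$ is not yet invertible: $x^{(1)}$ is determined only by $A$-data and never by $B$-data. I will therefore modify the definition so that, as in Theorem~\ref{The:X+omega.exp}, the system is a bilateral-type shift in which each piece of data has exactly one place to go. The concrete fix is to split each factor $X\cong X_A\oplus X_B$ once more and shift the $B$-halves along $j$, in the spirit of the $\omega$-backward shift. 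I would write
\[
(Tx)^{(j)} := 2J(P_A x^{(j)}) + R(P_B x^{(j+1)}),
\]
with $J,R:X\to X$ chosen so that $J$ maps $X_A$ onto $X_A$, $R$ maps $X_B$ onto $X_B$, and the first coordinate's $X_B$-part is recycled into the $X_A$-part. Concretely, on $X_A$ I take $J$ to be a forward shift along $(e_k)_{k\in A}$ which inserts $P_B x^{(1)}$-type data into its first slot, exactly as $[P_\omega z]_1u_1$ was inserted in Theorem~\ref{The:X+omega.pos-exp}. Since the forward and backward shifts on $\ell^p$ and $c_0$ are continuous, the inverse can be written down explicitly, as in Theorem~\ref{The:X+omega.exp}.

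For positive expansivity I take $x\ne0$. Any nonzero $X_A$-component in some coordinate $j$ is multiplied by $2^n$ under the weighted shift while staying in coordinate $j$, so $p_j(T^nx)\ge 2^n c\to\infty$, using $\|e_k^*\|=1$. A nonzero $X_B$-component in coordinate $j$ moves down to coordinate $1$ in finitely many steps, is then inserted into the $X_A$-part, and grows from then on. Hence some $p_k(T^nx)\to\infty$, and the argument ending the proof of Theorem~\ref{The:X+omega.pos-exp} gives positive expansivity.

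The main obstacle is the bookkeeping. The operator must be continuous, a genuine bijection with continuous inverse, and must carry all $B$-data into a part where the weight $2$ acts. A backward shift on $B$-data pushes that data down in $j$, so $T^{-1}$ needs room in coordinate $1$ to push data back up. The $\ell^p$ and $c_0$ structure provides this room: it lets me combine the infinitely many $B$-components into a single basis sequence in which the data flows only forward. The first step I would try is to rephrase everything through one basic sequence: index the basis vectors of all copies by $\NN\times\NN$, and arrange a single bilateral-type chain per coordinate $j$ along which the weighted shift is continuous in both directions.
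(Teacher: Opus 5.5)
Your construction does not prove the statement. The decisive failure is in the expansivity step, not only in the bookkeeping.

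In your modified operator $(Tx)^{(j)}=2J(P_Ax^{(j)})+R(P_Bx^{(j+1)})$, only the $X_B$-halves travel toward coordinate $1$. The $X_A$-half of coordinate $j$ stays in coordinate $j$ forever. You therefore only obtain $p_k(T^nx)\to\infty$ for some $k$ depending on $x$. The end of the proof of Theorem~\ref{The:X+omega.pos-exp} needs one fixed continuous seminorm $q$, and this uniformity is exactly what is at stake in spaces without continuous norm; otherwise $2I$ would already work.

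Concretely, suppose the only nonzero entry of $x$ is $x^{(j)}=e_a$ with $a\in A$. Then every $T^nx$ with $n\geq 0$ vanishes in coordinates $1,\dots,j-1$, so $d(T^nx,0)\leq 2^{1-j}$ for all $n\geq 0$. Letting $j\to\infty$ shows your $T$ is not positively expansive. All the data of every coordinate must be funnelled into one fixed coordinate where the factor $2$ acts.

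Your discarded first attempt $(Tx)^{(j)}=2Jx^{(j)}+Rx^{(j+1)}$ does exactly this. Disjointness of supports gives $p_1(Tx)\geq 2p_1(x)$ and $\|(T^mx)^{(1)}\|\geq\|x^{(m+1)}\|$, so it is positively expansive; it only fails to be onto.

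The invertibility repair also fails as described. A forward shift $S_A$ along $(e_k)_{k\in A}$ has one-dimensional cokernel in $X_A$. Hence no insertion map $\Phi$ can make $(a,b)\mapsto 2S_Aa+\Phi(b)$ a bijection from $X_A\oplus X_B$ onto $X_A$: composing $\Phi$ with the quotient map onto $X_A/S_A(X_A)\cong\KK$ would give an injection of $X_B$ into $\KK$. Moreover, if the same $J$ is used in the coordinates $j\geq 2$, the first $A$-slot of those coordinates is never hit, so $T$ is not surjective.

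The missing idea is to create an infinite-dimensional gap by using $X\cong\bigl(\bigoplus_{j\geq 1}X\bigr)_{\ell^p}$ (or the $c_0$-sum). Partition $\NN$ into infinitely many infinite sets $A_j$. Let $F_A$ be the forward shift along the blocks $X_{A_j}$; it is an isometry whose range is complemented by $X_{A_1}\cong X$. Then set $Tx=(2F_Ax^{(1)}+\iota x^{(2)},x^{(3)},x^{(4)},\dots)$, with $\iota:X\to X_{A_1}$ the relabelling isometry.

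This is Theorem~\ref{The:X+omega.exp} with each scalar replaced by a copy of $X$, and it is exactly the paper's construction. The inverse is explicit, and disjointness of supports gives $p_1(T^nx)\geq 2^np_1(x)$. Also $p_1(T^{j_0-1}x)\neq 0$ whenever $x^{(j_0)}\neq 0$, so $p_1(T^nx)\to\infty$ for the single seminorm $p_1$.

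Your closing suggestion is only a plan. As far as it is specified, a bilateral chain within each coordinate $j$ keeps the data of coordinate $j$ in coordinate $j$, which is precisely the defect described above.
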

\begin{proof}
	Let $X=\ell^p$ with $p\in [1,\infty[$ or $c_0$ and denote by $(e_n)_{n \geq 1}$ the canonical basis of such space. Let $(A_j)_{j \geq 1}$ be a partition of $\NN$ where each set $A_j$ is infinite. For each $j \geq 1$, we denote by $(n_{j,k})_{k \geq 1}$ the increasing enumeration of $A_j$ and we then consider the operator $F_A$ defined on $X$ by 
	\[
	F_A\left(\sum_{j=1}^{\infty}\sum_{k=1}^{\infty}x_{n_{j,k}} e_{n_{j,k}}\right)=\sum_{j=1}^{\infty}\sum_{k=1}^{\infty}x_{n_{j,k}} e_{n_{j+1,k}}.
	\]
	In other words, we identify $X$ with an infinity copy of itself and $F_A$ is the forward shift along these copies. In the same way, we define the operator $B_A$ on $X$ by
	\[
	B_A\left(\sum_{j=1}^{\infty}\sum_{k=1}^{\infty}x_{n_{j,k}} e_{n_{j,k}}\right)=\sum_{j=2}^{\infty}\sum_{k=1}^{\infty}x_{n_{j,k}} e_{n_{j-1,k}}.
	\]
	We now consider the operator $T : \omega(X) \longrightarrow \omega(X)$ given by
	\[
	Tx := \left(2F_A(x^{(1)})+\sum_{k=1}^{\infty} [x^{(2)}]_k e_{n_{1,k}}, x^{(3)}, x^{(4)}, x^{(5)}, \dots \right),
	\]
	for each $x = (x^{(j)})_{j \geq 1} \in \omega(X)$. This operator is continuous on $\omega(X)$ and is even invertible. In fact, it is not hard to check that
	\[
	T^{-1}x = \left( \frac{1}{2}B_A(x^{(1)}), \sum_{k=1}^{\infty} [x^{(1)}]_{n_{1,k}} e_k, x^{(2)}, x^{(3)}, x^{(4)}, \dots \right),
	\]
	for each $x = (x^{(j)})_{j \geq 1} \in \omega(X)$. Moreover the operator $T$ is positively expansive since for every vector $x = (x^{(j)})_{j \geq 1} \in \omega(X) \setminus \{0_{\omega(X)}\}$ we get that $p_1(T^n x)$ tends to infinity. Indeed, if $x = (x^{(j)})_{j \geq 1}$ is a non-zero vector in $\omega(X)$, there exists $j_0 \geq 1$ such that $x^{(j_0)}$ is different to $0_X$ and we then have $p_1(T^{j_0-1}x) \neq 0$. Moreover, for every $n \geq 0$ and every $x \in \omega(X)$ we have, by the definition of $T$, that
	\[
	p_1(T^n x) \geq 2^np_1(x).
	\]
	We then get the desired conclusion.
\end{proof}

The results obtained in this paper lead to the following open problem:

\begin{pbm}
	Characterize the Banach (or Fr\'echet) spaces $X$ for which the direct sum $X\oplus\omega$ admits a (positively) expansive operator in the metric sense.
\end{pbm}

We notice that $X$ must be infinite-dimensional since otherwise $X \oplus \omega$ would be isomorphic to $\omega$.

\vspace{1cm}

{

}

\vspace{1cm}

{\small
\noindent\textsc{Antoni L\'opez-Mart\'inez}: Universitat Polit\`ecnica de Val\`encia, Institut Universitari de Matem\`atica Pura i Aplicada, Edifici 8E, 4a planta, 46022 Val\`encia, Spain.\\ e-mail: alopezmartinez@mat.upv.es\\

\noindent\textsc{Quentin Menet}: Universit\'e de Mons, Service d'Analyse fonctionnelle, D\'epartement de Math\'ematique, Place du Parc 20, 7000 Mons, Belgium. \\ e-mail: quentin.menet@umons.ac.be
}

\end{document}